\documentclass[11pt]{amsart}

\usepackage{amsmath}
\usepackage{amssymb}
\usepackage{graphicx}
\usepackage{url}
\usepackage{multirow}

\usepackage{algorithm}
\usepackage{algorithmic}
\usepackage{float}
\usepackage{comment}
\usepackage[font=small,labelfont=bf]{caption}
\usepackage{subcaption}

\usepackage{color}
\usepackage{hyperref}

\newtheorem{theorem}{Theorem}[section]

\theoremstyle{definition}
\newtheorem{definition}[theorem]{Definition}
\newtheorem{example}[theorem]{Example}
\newtheorem{remark}[theorem]{Remark}

\newcommand{\moh}[1]{\textcolor{black}{#1}}

\newcommand{\alex}[1]{\textcolor{black}{#1}}
\newcommand{\rev}[1]{\textcolor{black}{#1}} % for the first reviewer
\newcommand{\rrev}[1]{\textcolor{black}{#1}} % for the second reviewer

\numberwithin{equation}{section}

\title[Adaptive Algorithms] {Adaptive Algorithms for Relatively Lipschitz Continuous Convex Optimization Problems}

\author[F. Stonyakin]{Fedor Stonyakin}
\address[F. Stonyakin]{Moscow Institute of Physics and Technology and V. I. Vernadsky Crimean Federal University, Russia}
\email{{\tt fedyor@mail.ru}}

\author[M. Alkousa]{Mohammad Alkousa}
\address[M. Alkousa]{Moscow Institute of Physics and Technology and HSE University, Russia}
\email{\tt mohammad.alkousa@phystech.edu}

\author[A. Titov]{Alexander Titov}
\address[A. Titov]{Moscow Institute of Physics and Technology and HSE University, Russia}
\email{\tt a.a.titov@phystech.edu}

\author[O. Savchuk]{Oleg Savchuk}
\address[O. Savchuk]{V. I. Vernadsky Crimean Federal University, Russia}
\email{\tt oleg.savchuk19@mail.ru}

\author[A. Gasnikov]{Alexander Gasnikov}
\address[A. Gasnikov]{Moscow Institute of Physics and Technology and IITP RAS, Moscow and Caucasus Mathematical Center, Adyghe State University, Maikop, Russia}
\email{\tt avgasnikov@gmail.com}

\thanks{The work of F. Stonyakin, A. Titov and O. Savchuk in Algorithms \ref{adaptive_alg4}, \ref{universal_alg2} and Theorems \ref{theorem_adaptive_Alg_2}, \ref{ThmUnivMeth2} was supported by Russian Science Foundation and Moscow (project No. 22-21-20065, \url{https://rscf.ru/project/22-21-20065}). The work of A. Gasnikov in Section \ref{experiments_Alg5} was supported by the Ministry of Science and Higher Education of the Russian Federation (Goszadaniye), No. 075-00337-20-03, project No. 0714-2020-0005.}

\keywords{Convex Optimization, Variational Inequality, Relative Boundedness, Relative Smoothness, Relative Lipschitz Continuity, Relative Strong Convexity, Adaptive Method.}

\subjclass[2010]{90C25, 90C06, 68Q25, 65K05, 65Y20, 68W40}

%\date{\today}

\begin{document}

\begin{abstract}
Recently, there were proposed some innovative convex optimization concepts, namely, relative smoothness \cite{Bauschke} and relative strong convexity \cite{Lu_Nesterov,Lu}. These approaches have significantly expanded the class of applicability of gradient-type methods with optimal estimates of the convergence rate. %, which are invariant regardless of the dimension of the problem. 
Later Yu. Nesterov and H. Lu \rrev{\cite{Lu_Nesterov,Lu}} introduced some modifications of the Mirror Descent method for convex minimization problems with the corresponding analogue of the Lipschitz condition (the so-called relative continuity or Lipschitz continuity). 
In this paper, we cover both the concept of relative smoothness and relative Lipschitz continuity and introduce some adaptive and universal methods which have optimal estimates of the convergence rate for the corresponding class of problems. %In this paper, we introduce the new concept of $(\alpha, L, \delta)$-relative smoothness which covers both the concept of relative smoothness and relative Lipschitz continuity. For the corresponding class of problems, we propose some adaptive and universal methods which have optimal estimates of the convergence rate. 
We consider the relative boundedness condition for the variational inequality problem and propose some adaptive optimal methods for this class of problems. Some results of the conducted numerical experiments are presented, which demonstrate the effectiveness of the proposed methods.
\end{abstract}

\maketitle

\section{Introduction}

The recent dramatic growth of various branches of science has led to the necessity of the development of numerical optimization methods in spaces of large and extra-large dimensions. A special place in modern optimization theory is given to gradient methods. Recently, there was introduced a new direction for the research, associated with the development of gradient-type methods for optimization problems with relatively smooth \cite{Bauschke} and relatively strongly convex \cite{Lu_Nesterov} functions. Such methods are in high demand and urgent due to numerous theoretical and applied problems. For example, the D-optimal design problem turned out to be relatively smooth \cite{Lu_Nesterov}. It is also quite interesting that in recent years there have appeared applications of these approaches (conditions of relative smoothness and strong convexity) to auxiliary problems for tensor methods for convex minimization problems of the second and higher orders \cite{Nest_tens,Nest_core}. It is worth noting that tensor methods make it possible to obtain optimal estimates of the rate of convergence of high-order methods for convex optimization problems \cite{kamzolov2022exploiting}.

A few years ago there was introduced a generalization of the Lipschitz condition for nonsmooth problems, namely, relative Lipschitz continuity \cite{Lu,Nestconf}. The concept of relative Lipschitz continuity essentially generalizes the classical Lipschitz condition and covers quite important applied problems, including the problem of finding the common point of ellipsoids (IEP), as well as the support vector machine (SVM) for the binary classification problem.

The concepts of relative smoothness, relative Lipschitz continuity, and relative strong convexity made it possible to significantly expand the limits of applicability of gradient type methods while preserving the optimal convergence rate $O (\frac{1}{\varepsilon^2})$ for relatively Lipschitz problems and $O(\frac{1}{\varepsilon})$ for relatively smooth problems ($\varepsilon$, as usual, denotes the accuracy of the solution \rrev{for functional residual}). The authors \rev{of} \cite{Dragomir} have shown that for the class of relatively smooth problems, such an estimate for the rate of convergence cannot be improved in the general case.

In this paper we consider the class of $(\alpha, L, \delta)$-relatively smooth objective functions (see Definition \ref{defalphacont}), which covers both the concept of relative smoothness and relative Lipschitz continuity. Let $Q$ be a closed convex subset of some finite-dimensional vector space. For the classical optimization problem 
\begin{equation}\label{main}
\min\limits_{x\in Q}f(x)
\end{equation}
we propose some analogues of the universal gradient method which automatically adjusts to the "degree of relative smoothness" of the $(\alpha, L, \delta)$-relatively smooth problem (Sect. \ref{sect_univers}). {\color{black}We also mention that the proposed algorithms are applicable to solve the problem of minimizing the relatively strongly convex functions, see \cite{savchuk2022adaptive} for more details. }
 
%{\color{red}We also consider the problem of minimizing the relatively strongly convex functions in two directions. The first one is the applicability of the introduced methods and their convergence rate, the second one is the restarted version of the Mirror Descent method for the constrained optimization problem.}

In addition to the classical optimization problem, we consider the problem of solving Minty variational inequality with  ($M$-)relatively bounded operator. For a given relatively bounded and monotone operator $g:Q\longrightarrow \mathbb{R}^n$, %which satisfies
%\begin{equation}\label{Rel_Boud}
    %\langle g(x),x-y\rangle \leqslant %M\sqrt{2V(y,x)} \quad \forall x,y \in %Q,
%\end{equation}
%for some $M>0$ ($V(y, x)$ is a Bregman divergence, defined in \eqref{Bregman}, Sect. \ref{basic}), and
%\begin{equation}\label{monotone_operator}
    %\langle g(y)-g(x),y-x\rangle \geqslant 0 \quad \forall x, y \in Q,
%\end{equation}
we need to find  a vector \rrev{$x_* \in Q$}, such that
\begin{equation}\label{VI_problem}
    \langle g(x),x_* - x\rangle \leqslant 0 \quad \forall x\in Q.
\end{equation}

Relative boundedness can be understood as an analogue of relative Lipschitz continuity for variational inequalities. It should be noted that the subgradient of a relatively Lipschitz continuous function satisfies the relative boundedness condition. This fact plays an important role in considering relatively Lipschitz continuous Lagrange saddle point problems and their reduction  to corresponding variational inequalities with the relatively bounded operator. Recently, in \cite{Stonyakin_etal} the authors proposed an adaptive version of the Mirror Prox method (extragradient type method) for variational inequalities with a condition similar to relative smoothness. It should be noted that variational inequalities with relatively smooth operators are applicable to the resource sharing problem \cite{Antonakopoulos}. Also, in \cite{Titov_etal} there were  introduced some non-adaptive switching subgradient algorithms for convex programming problems with relatively Lipschitz continuous functions. Recently, there was proposed a non-adaptive method for solving variational inequalities with the  relatively bounded operator \cite{MOTOR2021}. In this paper, we propose an adaptive algorithm for the corresponding class of problems.

The paper consists of the introduction and {\color{black} 6} main sections. In Sect. \ref{basic} we give some basic notations and definitions. In Sect. \ref{adapt} we consider the Minty variational inequality with a relatively bounded operator and propose an adaptive algorithm for \rrev{solving it}. Sect. \ref{adapt_univers} is devoted to adaptive algorithms for relatively smooth optimization problems. In Sect. \ref{sect_univers} we propose some universal algorithms for minimizing relatively smooth and relatively Lipschitz continuous functions. %{\color{black} In Sect. \ref{sect_online} we investigate  the applicability of the proposed methods to minimizing the relatively strongly convex functions.} 
Sect. \ref{experiments_Alg5} is devoted to the numerical experiments which demonstrate the effectiveness of the proposed methods.  

To sum it up, the contributions of the paper can be formulated as follows.
\begin{itemize}

\item We consider the variational inequality with the relatively bounded operator and propose some adaptive first-order methods to solve such a class of problems with optimal complexity estimates $O(\frac{1}{\varepsilon^2})$.
    
\item We introduce adaptive and universal algorithms for minimizing relatively smooth and relatively Lipschitz continuous functions and provide their theoretical justification. %an \\ \mbox{$(\alpha, L, \delta)$-relatively} smooth function and provide their theoretical justification. 
The stopping criteria of the introduced adaptive algorithms are \rrev{simple} (which is especially important in terms of numerical experiments), but universal algorithms are guaranteed to be applicable to a \rrev{wide} class of problems. %The class of  $(\alpha, L, \delta)$-relatively smooth functions covers both the relative smoothness and relative Lipschitz continuity concepts. 
%Moreover, 
Our approach allows us to minimize the sum of relatively smooth and relatively Lipschitz continuous functions, even though such a sum does not satisfy \rrev{neither relatively smoothness condition nor relatively Lipschitz one}. Theoretical estimates of the proposed methods are optimal both for convex relatively Lipschitz minimization problems $O(\frac{1}{\varepsilon^2})$ and convex relatively smooth minimization problems $O(\frac{1}{\varepsilon})$.
    
\item We provide the numerical experiments for the Intersection of   Ellipsoids  Problem  (IEP) and the Lagrange saddle point problem for the Support Vector  Machine (SVM) with inequality-type function constraints. We also, compare numerically, for (IEP), one of the proposed algorithms with the AdaMirr algorithm, which was recently proposed in \cite{AdaMirr_2021}. The conducted experiments demonstrate that the proposed algorithms work better than AdaMirr and they can work faster than the obtained theoretical estimates in practice. 
\end{itemize}

\section{Basic definitions and notations}\label{basic}

Let us give some basic definitions and notations concerning Bregman divergence and the prox structure, which will be used throughout the paper.

Let $(E,\|\cdot\|)$ be some normed \rrev{finite-dimensional real vector space} and $E^*$ be its  \rrev{dual space} with the norm
$$
    \|y\|_*=\max\limits_x\{\langle y,x\rangle,\|x\| \leqslant 1\},
$$
where $\langle y,x\rangle$ is the value of the \rrev{linear function} $y$ at $x \in E$. Assume that $Q\subset E$ is a closed convex set (for variational inequalities in Sect. \ref{adapt} we consider a convex compact set $Q\subset E$). 

Let $d: Q \longrightarrow \mathbb{R}$ be a distance-generating function (d.g.f) which is continuously differentiable and convex.
%Assume that $\min\limits_{x\in Q} d(x) = d(0)$ and suppose that there exists a constant $\Theta_0 >0 $, such that $d(x_{*}) \leqslant \Theta_0^2,$ where $x_*$ is a solution of the problem \eqref{main} (supposing that the problem \eqref{main} is solvable). Note that if there is a set of optimal points $X_* \subset Q$, we assume that $\min\limits_{x_* \in X_*} d(x_*) \leqslant \Theta_0^2$.

For all $x, y\in Q \subset E$, we consider the corresponding Bregman divergence
\begin{equation*}\label{Bregman}
    V(y, x) = d(y) - d(x) - \langle \nabla d(x), y-x \rangle.
\end{equation*}

Now we introduce the following concept of $(\alpha, L, \delta)$-relative smoothness which covers both the concept of relative smoothness and relative Lipschitz continuity. Further, we denote by $\nabla f$ an arbitrary subgradient of $f$.

\begin{definition}\label{defalphacont}
Let us call a convex function $f:Q\longrightarrow \mathbb{R}$ {\it $(\alpha, L, \delta)$-relatively smooth} for some $\alpha \in [0; 1]$, $L>0$ and $\delta>0$, if the following inequalities hold
\begin{equation}\label{eqalpha1relsm}
f(y) \leqslant f(x) + \langle \nabla f(x), y - x \rangle + LV(y, x) + L\alpha V(x,y) + \delta, \quad \rrev{\forall x, y \in Q,}
\end{equation}
\begin{equation}\label{eqalpha1relsm1}
\alpha\left(\langle \nabla f(x), y - x \rangle +  LV(y, x) + \delta\right) \geqslant 0 \;\;\; \forall x, y\in Q,
\end{equation}
for each subgradient $\nabla f(x)$ of $f(x)$.
\end{definition}

It is obvious that for $\alpha = 0$, $L>0$, and $\delta = 0$ one gets the well-known relative smoothness condition (often defined as $L$-relative smoothness, see \cite{Bauschke} for $\delta = 0$ and \cite{Stonyakin_etal} for the case of $\delta >0$). For $\alpha = 1$, $L = \frac{2M^2}{\varepsilon}$, and $\delta = \rev{\frac{\varepsilon}{4}} >0 $\rrev{, where $\varepsilon$ is arbitrary,} the inequalities \eqref{eqalpha1relsm} and \eqref{eqalpha1relsm1} follow from the condition of the relative Lipschitz continuity (also known as relative continuity or $M$-relative Lipschitz continuity), proposed recently in \cite{Lu,Nestconf} 

\begin{definition}\label{defrelLIP}
Convex function $f:Q\longrightarrow \mathbb{R}$ is called {\it $M$-relatively Lipschitz continuous} for some $M>0$, if the following inequality holds
\begin{equation*}
\langle \nabla f(x), y - x \rangle + M\sqrt{2V(y,x)} \geqslant 0 \quad \forall x, y \in Q.
\end{equation*}
\end{definition}

Indeed, for each $x, y \in Q$ we have $$\langle \nabla f(x), x - y \rangle \leqslant M\sqrt{2V(y, x)} \leqslant \frac{2M^2 }{\varepsilon} V(y, x) + \frac{\varepsilon}{4}.$$
Further, $$f(y) - f(x) \leqslant \langle \nabla f(y), y - x \rangle \leqslant M\sqrt{2V(x, y)} \leqslant \frac{2M^2 }{\varepsilon} V(x, y) + \frac{\varepsilon}{4}$$
and 
$$f(y) \leqslant  f(x) + \langle \nabla f(x), y - x \rangle + \frac{2M^2 }{\varepsilon} V(y, x) + \frac{2M^2 }{\varepsilon} V(x, y) + \frac{\varepsilon}{2}.$$

%(the proof of Theorem \ref{ThmUnivMeth2} in Appendix E)
So, each relatively Lipschitz continuous function $f$ satisfies \eqref{eqalpha1relsm} for large enough $L > 0$ and $\delta > 0$. 

It is worth mentioning that the sum of the relatively smooth function $f_1$ and relatively Lipschitz continuous convex function $f_2$ satisfies the \mbox{$(\alpha, L, \delta)$-relative} smoothness condition, if 
$$
f_1(y) \geqslant f_1(x) - rV(y, x) - q \quad \forall x, y \in Q,
$$
for some fixed $r, q > 0$, and the corresponding values $\alpha, L, \delta >0$ (this assumption can be understood as limiting the fast growth of $f_1$ \alex{and takes place, for example, when a function defined on a bounded set is bounded from below}). Generally, such a sum is neither relatively smooth nor relatively Lipschitz continuous function. 

\rev{Let us note the following important fact, which obviously follows from Lemma 3.2 from \cite{Stonyakin_etal} and plays a key role in first inequalities in the following proofs. According to this fact for each operator $g: Q \rightarrow \mathbb{R}^n$ 
$$y = \arg\min\limits_{x\in Q}\Big\{\langle g(z), x\rangle + \beta V(x,z)\Big\}, \quad \beta \geq 0, z \in Q
$$
we have
\begin{equation}\label{Lemma}
 \langle g(z), x \rangle + \beta V(x,z) \geq \langle g(z), y  \rangle + \beta V(y,z) + \beta V(x,y), \;\;\; \forall x, z\in Q.
\end{equation}}

%It mans that the considered algorithms for \mbox{$(\alpha, L, \delta)$-relative} smoothness condition can be used to minimize relatively Lipschitz continuous function with quadratic penalty function, which satisfies the condition of limited growth due to the penalty parameter.

\section{Adaptive Method for Variational Inequalities with Relatively Bounded Operators}\label{adapt}

In this section we consider the Minty variational inequality problem \eqref{VI_problem} with relatively bounded \eqref{Rel_Boud} and monotone \eqref{monotone_operator} operator $g$, i.e.
\begin{equation}\label{Rel_Boud}
    \langle g(x),x-y\rangle \leqslant M\sqrt{2V(y,x)} \quad \forall x,y \in Q,
\end{equation}
for some $M>0$ and
\begin{equation}\label{monotone_operator}
    \langle g(y)-g(x),y-x\rangle \geqslant 0 \quad \forall x, y \in Q,
\end{equation}
where $Q$ is a convex compact set. In order to solve such a class of problems, we propose an adaptive algorithm, listed as Algorithm \ref{adaptive_alg3}, below. 
\begin{algorithm}[!ht]
\caption{Adaptive Algorithm for Variational Inequalities with Relatively Bounded Operators.}\label{adaptive_alg3}
\begin{algorithmic}[1]
   \REQUIRE $\varepsilon > 0, \rrev{x_{0} \in Q}, L_{0}>0$,  $R>0$ s.t.  $\max\limits_{x\in Q} V\left(x, x_{0}\right) \leqslant R^{2}, \rrev{k = 0.}$
   \STATE Set $ k = k+1, L_{k+1}=\frac{L_k}{2}$.
	\STATE Find
			\begin{equation*}\label{minVI}
			 x_{k+1} = \arg\min\limits_{x\in Q}\{\langle g(x_k),x \rangle + L_{k+1}V(x,x_{k})\}.
			\end{equation*}
	\STATE \textbf{if}
    \begin{equation*}\label{condVI}
        \frac{\varepsilon}{2} +  \langle g(x_k),x_{k+1}-x_k\rangle + L_{k+1} V(x_{k+1},x_k)\geqslant 0,
    \end{equation*}
    \textbf{then } go to the next iteration (item 1).
    \STATE \textbf{else}
    $$
        \text{set }L_{k+1}=2L_{k+1}, \text{ and go to item 2}.
    $$
    \STATE \textbf{end if}
	\STATE Stopping criterion
	\begin{equation*}
		S_N := \sum\limits_{k=0}^{N-1}\frac{1}{L_{k+1}} \geqslant \frac{2R^2}{\varepsilon}.
	\end{equation*}
	\ENSURE 
	$\widehat{x} = \frac{1}{S_N}\sum\limits_{k=0}^{N-1} \frac{x_{k+1}}{L_{k+1}}$.
\end{algorithmic}
\end{algorithm}

\begin{theorem}\label{the_VI}
Let  $g: Q\longrightarrow \mathbb {R}^n$ be a relatively bounded and monotone operator, i.e. \eqref{Rel_Boud} and \eqref{monotone_operator} hold, \rrev{$L_0\leqslant \frac{2M^2}{\varepsilon}$}. Then after the stopping of Algorithm \ref{adaptive_alg3}, the following inequality holds
$$
    \max\limits_{x\in Q}\langle g(x),\widehat{x}-x \rangle \leqslant \frac{1}{S_N}\sum\limits_{k=0}^{N-1}\frac{1}{L_{k+1}}\langle g(x),x_{k}-x \rangle \leqslant \varepsilon.
$$
Moreover, the total number of iterations will not exceed $N=\left\lceil\displaystyle\frac{4M^2R^2}{\varepsilon^2}\right\rceil.$
\end{theorem}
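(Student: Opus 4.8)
The plan is to run a Mirror-Descent-type analysis with the variable stepsizes $1/L_{k+1}$, convert the telescoped estimate into a bound on the gap $\max_{x\in Q}\langle g(x),\widehat{x}-x\rangle$, and then bound $N$ by showing the backtracking keeps every $L_{k+1}$ below $2M^2/\varepsilon$.

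First I would apply the prox-lemma to the step \eqref{minVI}: since $x_{k+1}$ minimizes $\langle g(x_k)/L_{k+1},x\rangle+V(x,x_k)$ over $Q$, for each $u\in Q$
$$\frac{1}{L_{k+1}}\langle g(x_k),x_{k+1}-u\rangle\leqslant V(u,x_k)-V(u,x_{k+1})-V(x_{k+1},x_k).$$
Next I would split $\langle g(x_k),x_k-u\rangle=\langle g(x_k),x_k-x_{k+1}\rangle+\langle g(x_k),x_{k+1}-u\rangle$ and bound the first term by the accepted condition \eqref{condVI}, which says exactly $\langle g(x_k),x_k-x_{k+1}\rangle\leqslant\frac{\varepsilon}{2}+L_{k+1}V(x_{k+1},x_k)$; after dividing by $L_{k+1}$ the term $V(x_{k+1},x_k)$ cancels, leaving
$$\frac{1}{L_{k+1}}\langle g(x_k),x_k-u\rangle\leqslant\frac{\varepsilon}{2L_{k+1}}+V(u,x_k)-V(u,x_{k+1}).$$
Then I would use monotonicity \eqref{monotone_operator} in the form $\langle g(u),x_k-u\rangle\leqslant\langle g(x_k),x_k-u\rangle$ to replace $g(x_k)$ by $g(u)$, sum over $k=0,\dots,N-1$, telescope the Bregman terms, and use $V(u,x_0)\leqslant R^2$ and $V(u,x_N)\geqslant0$ to obtain $\sum_{k}\frac{1}{L_{k+1}}\langle g(u),x_k-u\rangle\leqslant\frac{\varepsilon}{2}S_N+R^2$. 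Dividing by $S_N$ and invoking the stopping rule $S_N\geqslant 2R^2/\varepsilon$ turns the right-hand side into $\varepsilon$. Finally, since $\frac{1}{S_N}\sum_k\frac{1}{L_{k+1}}=1$, linearity gives $\langle g(u),\widehat{x}-u\rangle=\frac{1}{S_N}\sum_k\frac{1}{L_{k+1}}\langle g(u),x_k-u\rangle$ for every $u$, and maximizing over $u\in Q$ yields the displayed chain.

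For the iteration count the key point is that \eqref{condVI} becomes automatic once $L_{k+1}$ is large. Inserting the relative boundedness \eqref{Rel_Boud} (with $x=x_k$, $y=x_{k+1}$) into the left-hand side of \eqref{condVI} and writing $\tau=\sqrt{2V(x_{k+1},x_k)}$, that left-hand side is at least $\frac{\varepsilon}{2}-M\tau+\frac{L_{k+1}}{2}\tau^2\geqslant\frac{\varepsilon}{2}-\frac{M^2}{2L_{k+1}}$, which is nonnegative whenever $L_{k+1}\geqslant M^2/\varepsilon$. Since the scheme only doubles $L_{k+1}$, it never accepts a value as large as $2M^2/\varepsilon$, so $\frac{1}{L_{k+1}}>\frac{\varepsilon}{2M^2}$ and $S_N>\frac{N\varepsilon}{2M^2}$; combining this with $S_{N-1}<2R^2/\varepsilon$ (the algorithm did not stop one step earlier) gives $N-1<4M^2R^2/\varepsilon^2$, hence $N\leqslant\lceil 4M^2R^2/\varepsilon^2\rceil$.

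The prox-lemma and the telescoping are routine; the step I expect to require the most care is the uniform bound $L_{k+1}<2M^2/\varepsilon$ — one has to argue that the doubling procedure cannot overshoot this threshold (and, strictly speaking, that the initial guess $L_0$ is not taken pathologically large, otherwise a few extra, logarithmically many, iterations appear). A minor additional point is the slightly informal maximum in the statement: the middle expression is to be read at the same $x\in Q$ (equivalently, also maximized), which is legitimate because the inequality $\frac{1}{S_N}\sum_k\frac{1}{L_{k+1}}\langle g(x),x_k-x\rangle\leqslant\varepsilon$ is derived for an arbitrary $x\in Q$.
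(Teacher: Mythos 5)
Your proposal is correct and follows essentially the same route as the paper's Appendix A: the prox-lemma for the step \eqref{minVI}, the splitting of $\langle g(x_k),x_k-u\rangle$ controlled by the accepted condition \eqref{condVI}, monotonicity to pass to $g(u)$, telescoping with the stopping rule, and the same relative-boundedness argument showing \eqref{condVI} holds once $L_{k+1}\geqslant M^2/\varepsilon$, whence $L_{k+1}<2M^2/\varepsilon$ and $N\leqslant\lceil 4M^2R^2/\varepsilon^2\rceil$. Your explicit caveat about a pathologically large $L_0$ is a fair observation that the paper also leaves implicit.
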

\begin{proof}
The proof is given in Appendix A.
\end{proof}

\rrev{
\begin{remark}\label{rem_gap}
Let us note, that defining 
\begin{equation*}
\Delta_N:= \frac{1}{S_N} \max _{x \in Q} \sum_{k=0}^{N-1} \frac{1}{L_{k+1}}\left\langle g\left(x_k\right), x_k-x\right\rangle,
\end{equation*}
one can get that the convergence for the function`s residuals 
\begin{equation*}
\min _{0 \leqslant k \leqslant N-1} f\left(x_k\right)-f^*,
\end{equation*}
for minimization problems with $g(x)$, defined as $g(x) = \nabla f(x)$, which also covers the primal-dual gap for saddle-point problems.
\end{remark}
}

Let us consider the following modification of Algorithm \ref{adaptive_alg3} with adaptation both to the parameters $L = \frac{M^2}{\varepsilon}$ and $\delta = \frac{\varepsilon}{2}$. 

\begin{algorithm}[!ht]
\caption{Adaptation to Inexactness for Variational Inequalities with Relatively Bounded Operators.}\label{Alg_50}
\begin{algorithmic}[1]
   \REQUIRE $\varepsilon > 0, \rrev{x_{0}\in Q}, L_{0}>0, \delta_0 >0$, $R$ s.t. $\max\limits_{x \in Q} V\left(x, x_{0}\right) \leqslant R^{2}, \rrev{k = 0}.$
    \STATE Set $k=k+1, L_{k+1}=\frac{L_k}{2}, \delta_{k+1}=\frac{\delta_k}{2}$.
    \STATE Find
	\begin{equation}\label{subproblem_alg50}
		x_{k+1} = \arg\min\limits_{x\in Q}\{\langle g(x_k),x \rangle + L_{k+1}V(x,x_{k})\}.
	\end{equation}
	%\IF{
	\STATE \textbf{if}
    	\begin{equation}\label{condalg50}
    	0 \leqslant \langle  g\left(x_{k}\right), x_{k+1}-x_{k}\rangle+L_{k+1} V(x_{k+1}, x_{k}) +\delta_{k+1},
    	\end{equation}
    \textbf{then} go to the next iteration (item 1).
   
    \STATE \textbf{else}
    $$
        \text{set }L_{k+1}=2\cdot L_{k+1}, \delta_{k+1}= 2\cdot\delta_{k+1}\text{ and go to item 2}.
    $$
 \STATE \textbf{end if}
\ENSURE $\widehat{x} = \frac{1}{S_N}\sum\limits_{k=0}^{N-1}\frac{x_{k+1}}{L_{k+1}}$.
\end{algorithmic}
\end{algorithm}

\begin{theorem}\label{theorem_estimate_alg50}
Let  $g: Q\longrightarrow \mathbb {R}^n$ be a relatively bounded and monotone operator, i.e. \eqref{Rel_Boud} and \eqref{monotone_operator} hold, \rrev{$L_0\leqslant 2L = \frac{2M^2}{\varepsilon}$}. Then after $N$ steps of Algorithm \ref{Alg_50} the following inequality holds
\begin{equation}\label{estimate_alg50}
    \max\limits_{x\in Q}\langle g(x),\widehat{x} - x \rangle  \leqslant \frac{R^2}{S_N} + \frac{1}{S_N} \sum_{k = 0}^{N-1} \frac{\delta_{k+1}}{L_{k+1}}.
\end{equation}
\rrev{Moreover, if $L_0\leqslant 2L$ and $\delta_0\leqslant \varepsilon$}, the auxiliary problem \eqref{subproblem_alg50} in Algorithm \ref{Alg_50} is solved no more than \rev{$2N + \log_2\frac{2L}{L_0}$ times.}
\end{theorem}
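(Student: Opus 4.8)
The plan is to run the standard Mirror Descent estimate while keeping the adaptive quantities $L_{k+1},\delta_{k+1}$ explicit. First I would record the first-order optimality condition for the auxiliary problem \eqref{subproblem_alg50}: since $x_{k+1}$ minimizes $\langle g(x_k),x\rangle+L_{k+1}V(x,x_k)$ over the convex set $Q$, for every $x\in Q$
\[
\langle g(x_k)+L_{k+1}\bigl(\nabla d(x_{k+1})-\nabla d(x_k)\bigr),\,x-x_{k+1}\rangle\geqslant 0 .
\]
Combining this with the three-point identity $\langle\nabla d(x_{k+1})-\nabla d(x_k),x-x_{k+1}\rangle=V(x,x_k)-V(x,x_{k+1})-V(x_{k+1},x_k)$ gives
\[
\langle g(x_k),x_{k+1}-x\rangle\leqslant L_{k+1}\bigl(V(x,x_k)-V(x,x_{k+1})-V(x_{k+1},x_k)\bigr).
\]

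Next I would write $\langle g(x_k),x_k-x\rangle=\langle g(x_k),x_k-x_{k+1}\rangle+\langle g(x_k),x_{k+1}-x\rangle$ and bound the first term using the accepted inexactness condition \eqref{condalg50}, which is exactly $\langle g(x_k),x_k-x_{k+1}\rangle\leqslant L_{k+1}V(x_{k+1},x_k)+\delta_{k+1}$. Adding the two bounds, the terms $\pm L_{k+1}V(x_{k+1},x_k)$ cancel, yielding the per-iteration inequality
\[
\frac{1}{L_{k+1}}\langle g(x_k),x_k-x\rangle\leqslant V(x,x_k)-V(x,x_{k+1})+\frac{\delta_{k+1}}{L_{k+1}} .
\]
By monotonicity \eqref{monotone_operator}, $\langle g(x),x_k-x\rangle\leqslant\langle g(x_k),x_k-x\rangle$, so the same inequality holds with $g(x)$ in place of $g(x_k)$. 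Summing over $k=0,\dots,N-1$ the Bregman terms telescope; bounding $V(x,x_0)\leqslant R^2$ and dropping $-V(x,x_N)\leqslant 0$ gives $\sum_{k=0}^{N-1}\frac{1}{L_{k+1}}\langle g(x),x_k-x\rangle\leqslant R^2+\sum_{k=0}^{N-1}\frac{\delta_{k+1}}{L_{k+1}}$. Dividing by $S_N$ and using the linearity of $y\mapsto\langle g(x),y\rangle$ to identify the left-hand side as $\langle g(x),\widehat x-x\rangle$ (with $\widehat x\in Q$, being a convex combination of $x_0,\dots,x_{N-1}\in Q$), then taking the maximum over $x\in Q$, produces \eqref{estimate_alg50}. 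As an aside, since $L_k$ and $\delta_k$ are always halved or doubled simultaneously, $\delta_{k+1}/L_{k+1}\equiv\delta_0/L_0$, so the last term in \eqref{estimate_alg50} is simply $N\delta_0/(L_0 S_N)$.

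For the count of auxiliary problems I would use an amortized argument on the potential $\log_2 L_k$. If $m_k$ problems \eqref{subproblem_alg50} are solved at iteration $k$ (one per pass through line~2), then because line~1 first halves $L$ and each failed test in line~3 doubles it, $L_{k+1}=2^{\,m_k-2}L_k$, hence $\sum_{k=0}^{N-1}m_k=2N+\log_2(L_N/L_0)$. It remains to bound $L_N$. Applying relative boundedness \eqref{Rel_Boud} with $x=x_k,\ y=x_{k+1}$ together with the elementary estimate $L_{k+1}V(x_{k+1},x_k)-M\sqrt{2V(x_{k+1},x_k)}\geqslant-\frac{M^2}{2L_{k+1}}$ shows that the test \eqref{condalg50} holds automatically once $L_{k+1}\delta_{k+1}\geqslant M^2/2$, i.e. once $L_{k+1}\geqslant M\sqrt{L_0/(2\delta_0)}$; therefore every accepted $L_{k+1}$ satisfies $L_{k+1}\leqslant\max\{L_0,\,2M\sqrt{L_0/(2\delta_0)}\}$, so $\log_2(L_N/L_0)$ is bounded by a constant and is $\leqslant N$ once $N$ is not too small, which gives the bound $3N$.

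The routine part is the telescoping leading to \eqref{estimate_alg50}; the delicate point is the second claim, where one must guarantee that the inner line-search in lines~3--4 always terminates and that its cumulative cost is controlled — this is where relative boundedness is used once more, now to cap the growth of $L_k$. A small thing to keep track of is that $\widehat x$ is genuinely feasible, which is what licenses taking the maximum over $Q$ on the left-hand side.
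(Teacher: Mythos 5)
Your proposal is correct and follows essentially the same route as the paper: the paper proves \eqref{estimate_alg50} by repeating the telescoping Mirror Descent argument of Theorem \ref{the_VI} with $\frac{\varepsilon}{2}$ replaced by $\delta_{k+1}$, and counts the auxiliary solves via the identical amortized identity $\sum_k i_{k+1}=2N+\log_2(L_N/L_0)$ together with a uniform bound on $L_N$ coming from relative boundedness. Your derivation of the threshold $L_{k+1}\delta_{k+1}\geqslant M^2/2$ is in fact a slightly more explicit justification of the step the paper dispatches with ``at least one of $L_N\leqslant 2L$, $\delta_N\leqslant 2\delta$ holds.''
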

\begin{proof}
The proof is given in Appendix B.
\end{proof}

\rrev{
\begin{remark}
    Note, that Remark \eqref{rem_gap} also takes place for Theorem \eqref{theorem_estimate_alg50}.
\end{remark}}

\begin{remark}\label{remarkoptim}
The condition of the relative boundedness is essential only for justifying \eqref{condalg50}. For $L_{k+1} \geqslant L = \frac{M^2}{\varepsilon}$ and $\delta_{k+1} \geqslant \frac{\varepsilon}{2}$, \eqref{condalg50} certainly holds. 
So, \rrev{if $L_0\leqslant C_1 L$} for $C = \max\left\{ \rrev{C_1}; \, \frac{\varepsilon}{\delta_0}\right\}$, $L_{k+1} \leqslant C L$ and $\delta_{k+1} \leqslant \frac{C\varepsilon}{2}$ $\forall k \geqslant0$. Thus, $\max\limits_{x\in Q}\langle g(x),\widehat{x} - x \rangle  \leqslant \varepsilon$ after $N = O(\varepsilon^{-2})$ iterations of Algorithm \ref{Alg_50}. This fact, in essence, constitutes the optimality of the proposed method for the class of variational inequality problems with monotone $M$-relatively bounded operators.
%So, for $C = \max\left\{\frac{2L}{L_0}; \, \frac{2\delta}{\delta_0}\right\}$, $L_{k+1} \leqslant C L$ and $\delta_{k+1} \leqslant C\delta = \frac{C\varepsilon}{2}$ $\forall k \geqslant0$. Thus, $\max\limits_{x\in Q}\langle g(x),\widehat{x} - x \rangle  \leqslant \varepsilon$ after $N = O(\varepsilon^{-2})$ iterations of Algorithm \ref{Alg_50}. This fact, in essence, constitutes the optimality of the proposed method for the class of variational inequality problems with monotone $M$-relatively bounded operators.  
\end{remark}

%%%%%%%%%%%%%%%%%%%%%%%%%%%%%
\section{Adaptive Algorithms for Relatively Lipschitz Continuous Convex Optimization Problems}\label{adapt_univers}

Now we consider the classical optimization problem \eqref{main} under the assumption of $M$-relative Lipschitz continuity of the objective function $f$. For solving such a type of problems we propose two adaptive algorithms, listed as Algorithm \ref{adaptive_alg4} and Algorithm \ref{Alg_5}, below. 
\begin{algorithm}[!ht]
\caption{Adaptive Algorithm for Relatively Lipschitz Continuous Optimization Problems.}\label{adaptive_alg4}
\begin{algorithmic}[1]
   \REQUIRE $\varepsilon > 0, \rrev{x_{0}\in Q}, L_{0}>0$, $R$ s.t. $V\left(x_{*}, x_{0}\right) \leqslant R^{2}, \rrev{k = 0}.$
   \STATE Set $ k = k+1, L_{k+1}=\frac{L_k}{2}$.
	\STATE Find
		\begin{equation*}\label{subproblem_alg4}
			 x_{k+1} = \arg\min\limits_{x\in Q}\{\langle\nabla f(x_k),x \rangle + L_{k+1}V(x,x_{k})\}.
		\end{equation*}
	\STATE \textbf{if}
    \begin{equation}\label{condalg4}
    	0 \leqslant \langle  \nabla f\left(x_{k}\right), x_{k+1}-x_{k}\rangle+L_{k+1} V(x_{k+1}, x_{k}) +\frac{\varepsilon}{2},
    \end{equation}
    \textbf{then} go to the next iteration (item 1). 
    \STATE \textbf{else}
    $$
        \text{set }L_{k+1}=2\cdot L_{k+1} \text{ and go to item  2}.
    $$
    \STATE \textbf{end if}
    \STATE \rev{Stopping criterion
	\begin{equation*}
		S_N = \sum\limits_{k=0}^{N-1}\frac{1}{L_{k+1}} \geqslant \frac{2R^2}{\varepsilon}.
	\end{equation*}}
	\ENSURE $\widehat{x} = \frac{1}{S_N}\sum\limits_{k=0}^{N-1} \frac{x_{k+1}}{L_{k+1}}$.
\end{algorithmic}
\end{algorithm}

\begin{theorem}\label{theorem_adaptive_Alg_2}
Let  $f: Q\longrightarrow \mathbb {R}$ be a convex and $M$-relatively Lipschitz continuous function, i.e. \eqref{eqalpha1relsm} and \eqref{eqalpha1relsm1} take place with $\alpha = 1, \delta = \frac{\varepsilon}{2}, L \rev{\geq} \frac{M^2}{\varepsilon}$. Then after \alex{the stopping of} Algorithm \ref{adaptive_alg4}, the following inequality holds $ f(\widehat{x})-f(x_*)\leqslant \varepsilon$. 
Moreover, the total number of iterations will not exceed $N=\left\lceil\displaystyle\frac{4M^2R^2}{\varepsilon^2}\right\rceil.$
\end{theorem}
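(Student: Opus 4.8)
The plan is to run the standard prox-based (mirror-descent) argument, now with the adaptive step sizes $L_{k+1}$, built from three ingredients: the optimality (``prox'') lemma for step \eqref{subproblem_alg4}, the accepted inequality \eqref{condalg4}, and convexity of $f$; then average with weights $1/L_{k+1}$ and invoke the stopping rule. First I would write the first-order optimality condition for $x_{k+1}$, namely $\langle \nabla f(x_k) + L_{k+1}(\nabla d(x_{k+1}) - \nabla d(x_k)),\, x - x_{k+1}\rangle \ge 0$ for all $x\in Q$, and combine it with the three-point identity for the Bregman divergence to obtain, for every $x\in Q$,
$$\langle \nabla f(x_k),\, x_{k+1} - x\rangle \;\le\; L_{k+1}\big(V(x,x_k) - V(x,x_{k+1}) - V(x_{k+1},x_k)\big),$$
and I would then specialize this to $x = x_*$.

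Next, by convexity $f(x_k) - f(x_*) \le \langle \nabla f(x_k), x_k - x_*\rangle$; splitting $\langle \nabla f(x_k), x_k - x_*\rangle = \langle \nabla f(x_k), x_k - x_{k+1}\rangle + \langle \nabla f(x_k), x_{k+1} - x_*\rangle$, bounding the first summand by $L_{k+1}V(x_{k+1},x_k) + \tfrac{\varepsilon}{2}$ via exactly the accepted inequality \eqref{condalg4} and the second by the prox-lemma above, the terms $\pm L_{k+1}V(x_{k+1},x_k)$ cancel and I get
$$f(x_k) - f(x_*) \;\le\; \tfrac{\varepsilon}{2} + L_{k+1}\big(V(x_*,x_k) - V(x_*,x_{k+1})\big).$$
Dividing by $L_{k+1}>0$, summing over $k = 0,\dots,N-1$ and telescoping (using $V(x_*,x_0)\le R^2$ and $V(x_*,x_N)\ge 0$) gives $\sum_{k=0}^{N-1}\tfrac{f(x_k)-f(x_*)}{L_{k+1}} \le \tfrac{\varepsilon}{2}S_N + R^2$, after which convexity of $f$ (Jensen's inequality with weights $\tfrac{1}{S_N L_{k+1}}$) yields, for the associated weighted average $\widehat{x}$ of the iterates,
$$f(\widehat{x}) - f(x_*) \;\le\; \tfrac{\varepsilon}{2} + \tfrac{R^2}{S_N};$$
the stopping condition $S_N \ge \tfrac{2R^2}{\varepsilon}$ then makes the right-hand side at most $\varepsilon$.

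For the iteration count I would use $M$-relative Lipschitz continuity of $f$: from $\langle \nabla f(x_k), x_{k+1}-x_k\rangle \ge -M\sqrt{2V(x_{k+1},x_k)}$, the left-hand side of \eqref{condalg4} is at least $L_{k+1}t^2 - \sqrt{2}\,M\,t + \tfrac{\varepsilon}{2}$ with $t = \sqrt{V(x_{k+1},x_k)} \ge 0$, a quadratic whose discriminant is nonpositive exactly when $L_{k+1}\ge \tfrac{M^2}{\varepsilon}$. Hence the inner doubling loop always accepts once $L_{k+1}$ reaches $\tfrac{M^2}{\varepsilon}$, so $L_{k+1}\le \tfrac{2M^2}{\varepsilon}$ for every $k$ (a short induction, using the halving at item~1; the initial $L_0$ contributes only a harmless $\max$). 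Therefore $S_N \ge \tfrac{N\varepsilon}{2M^2}$, so $S_N \ge \tfrac{2R^2}{\varepsilon}$ is attained as soon as $N \ge \tfrac{4M^2R^2}{\varepsilon^2}$, i.e. in at most $\big\lceil \tfrac{4M^2R^2}{\varepsilon^2} \big\rceil$ outer iterations (the total number of inner doublings is $O(N)$ by the usual telescoping of $\log_2 L_{k+1}$).

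The point needing the most care is matching the averaged iterate produced by the argument — which is naturally the weighted average of the subgradient-evaluation points $x_k$ — to the output $\widehat{x}=\tfrac{1}{S_N}\sum_{k=0}^{N-1}\tfrac{x_{k+1}}{L_{k+1}}$ written in the algorithm: one must either verify that the indexing there coincides with the one above or absorb the one-step shift, for instance by additionally estimating $f(x_{k+1})-f(x_k)$ via relative Lipschitz continuity. A secondary technical nuisance is making the bound $L_{k+1}\le 2M^2/\varepsilon$ fully rigorous for the first few steps and accounting precisely for the inner doubling iterations.
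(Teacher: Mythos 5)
Your proposal is correct and follows essentially the same route as the paper's proof: the prox-optimality inequality combined with the accepted condition \eqref{condalg4} to cancel the $V(x_{k+1},x_k)$ terms, convexity, weighted telescoping against $V(x_*,x_0)\leqslant R^2$, the stopping rule $S_N\geqslant 2R^2/\varepsilon$, and the observation that \eqref{condalg4} is guaranteed once $L_{k+1}\geqslant M^2/\varepsilon$, so that $L_{k+1}\leqslant 2M^2/\varepsilon$ and hence $N\leqslant\lceil 4M^2R^2/\varepsilon^2\rceil$. The indexing mismatch you flag (the argument naturally bounds the weighted average of the $x_k$, while the algorithm outputs the weighted average of the $x_{k+1}$) is present and glossed over in the paper's own proof as well, so your explicit acknowledgement of it is, if anything, the more careful treatment.
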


\begin{proof}
The proof is given in Appendix C.
\end{proof}

\begin{algorithm}[!ht]
\caption{Adaptation to Inexactness for Relatively Lipschitz Continuous Optimization Problems.}\label{Alg_5}
\begin{algorithmic}[1]
   \REQUIRE $\varepsilon > 0, \rrev{x_{0}\in Q}, L_{0}>0, \delta_0 >0$, $R$ s.t. $V\left(x_{*}, x_{0}\right) \leqslant R^{2}, \rrev{k = 0}.$
    \STATE Set $k=k+1, L_{k+1}=\frac{L_k}{2}, \delta_{k+1}=\frac{\delta_k}{2}$.
    \STATE Find
   \begin{equation}\label{subproblem_alg5}
		x_{k+1} = \arg\min\limits_{x\in Q}\{\langle \nabla f(x_k),x \rangle + L_{k+1}V(x,x_{k})\}.
	\end{equation}
	\STATE \textbf{if}
    \begin{equation*}\label{condalg5}
    0 \leqslant \langle  \nabla f\left(x_{k}\right), x_{k+1}-x_{k}\rangle+L_{k+1} V(x_{k+1}, x_{k}) +\delta_{k+1},
    \end{equation*}
    \textbf{then} go to the next iteration (item 1).
    \STATE \textbf{else}
    $$
        \text{set }L_{k+1}=2\cdot L_{k+1}, \delta_{k+1}= 2\cdot\delta_{k+1}\text{ and go to item 2}.
    $$
    \STATE \textbf{end if}
	\ENSURE $\widehat{x} = \frac{1}{S_N}\sum\limits_{k=0}^{N-1}\frac{x_{k+1}}{L_{k+1}}$.
\end{algorithmic}
\end{algorithm}

\begin{theorem}\label{theorem_estimate_alg5}
Let  $f: Q\longrightarrow \mathbb {R}$ be a convex and $M$-relatively Lipschitz continuous function, i.e. \eqref{eqalpha1relsm} and \eqref{eqalpha1relsm1} take place with $\alpha = 1, \delta = \frac{\varepsilon}{2}, L = \frac{M^2}{\varepsilon}$. Then after \alex{$N$ steps of} Algorithm \ref{Alg_5}, the following inequality holds
\begin{equation}\label{estimate_alg5}
    f(\widehat{x})-f(x_*) \leqslant \frac{R^2}{S_N} + \frac{1}{S_N} \sum_{k = 0}^{N-1} \frac{\delta_{k+1}}{L_{k+1}}.
\end{equation}
\end{theorem}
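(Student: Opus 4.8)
The plan is to run the standard adaptive mirror-descent analysis, paralleling the proof of Theorem~\ref{theorem_estimate_alg50} (Appendix~B) with monotonicity of the operator replaced by convexity of $f$; only convexity of $f$ and the algorithmic acceptance test \eqref{condalg5} are actually used (the relative-smoothness upper bound \eqref{eqalpha1relsm} is not needed for this particular estimate).

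First I would record the prox-mapping inequality for \eqref{subproblem_alg5}. Since $x_{k+1}$ minimizes $x\mapsto\langle\nabla f(x_k),x\rangle+L_{k+1}V(x,x_k)$ over the convex set $Q$ and $d$ is convex and differentiable, the first-order optimality condition together with the three-point identity $V(x,x_k)-V(x,x_{k+1})-V(x_{k+1},x_k)=\langle\nabla d(x_{k+1})-\nabla d(x_k),\,x-x_{k+1}\rangle$ yields, for every $x\in Q$,
\begin{equation*}
\langle\nabla f(x_k),x_{k+1}-x\rangle\leqslant L_{k+1}\bigl(V(x,x_k)-V(x,x_{k+1})-V(x_{k+1},x_k)\bigr).
\end{equation*}
I would also check that the \textbf{else} loop terminates, so that $x_{k+1}$ and the accepted pair $(L_{k+1},\delta_{k+1})$ are well defined: by $M$-relative Lipschitz continuity $-\langle\nabla f(x_k),x_{k+1}-x_k\rangle\leqslant M\sqrt{2V(x_{k+1},x_k)}$, and by the AM--GM inequality $L_{k+1}V(x_{k+1},x_k)+\delta_{k+1}\geqslant 2\sqrt{L_{k+1}\delta_{k+1}V(x_{k+1},x_k)}\geqslant M\sqrt{2V(x_{k+1},x_k)}$ once $L_{k+1}\delta_{k+1}\geqslant M^2/2$; since each rejection quadruples the product $L_{k+1}\delta_{k+1}$, condition \eqref{condalg5} is eventually satisfied (and $L_{k+1},\delta_{k+1}$ remain bounded).

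Next I would set $x=x_*$ in the displayed inequality, split $\langle\nabla f(x_k),x_{k+1}-x_*\rangle=\langle\nabla f(x_k),x_{k+1}-x_k\rangle+\langle\nabla f(x_k),x_k-x_*\rangle$, and use convexity of $f$ in the form $\langle\nabla f(x_k),x_k-x_*\rangle\geqslant f(x_k)-f(x_*)$. Rearranging and then invoking the accepted inequality \eqref{condalg5} in the form $-\langle\nabla f(x_k),x_{k+1}-x_k\rangle-L_{k+1}V(x_{k+1},x_k)\leqslant\delta_{k+1}$ cancels the $V(x_{k+1},x_k)$ term and gives the key one-step estimate
\begin{equation*}
f(x_k)-f(x_*)\leqslant \delta_{k+1}+L_{k+1}\bigl(V(x_*,x_k)-V(x_*,x_{k+1})\bigr).
\end{equation*}

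Finally I would divide by $L_{k+1}>0$, sum over $k=0,\dots,N-1$ so that the Bregman terms telescope, and use $V(x_*,x_0)\leqslant R^2$ and $V(x_*,x_N)\geqslant 0$ to get $\sum_{k=0}^{N-1}\frac{f(x_k)-f(x_*)}{L_{k+1}}\leqslant R^2+\sum_{k=0}^{N-1}\frac{\delta_{k+1}}{L_{k+1}}$. Dividing by $S_N$ and applying Jensen's inequality to the convex combination defining $\widehat x$ (weights $\tfrac{1}{L_{k+1}S_N}$) yields \eqref{estimate_alg5}. The only delicate bookkeeping point is the identity of the averaged iterate: the telescoping argument naturally certifies the $V$-weighted average of the query points $x_k$ at which the subgradients are evaluated, so in the definition of $\widehat x$ I would read $x_k$ in place of $x_{k+1}$, consistently with Algorithms~\ref{adaptive_alg3} and \ref{Alg_50}; with that reading the bound follows verbatim.
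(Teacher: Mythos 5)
Your proof is correct and follows essentially the same route as the paper's: the paper proves Theorem~\ref{theorem_adaptive_Alg_2} by exactly this prox-mapping inequality / acceptance-test / telescoping argument and obtains Theorem~\ref{theorem_estimate_alg5} by replacing $\frac{\varepsilon}{2}$ with $\delta_{k+1}$, just as you do. Your closing remark about the averaged iterate is well taken --- the paper's own telescoping bound is likewise stated in terms of $f(x_k)$, so the $x_{k+1}$ in the output line of Algorithm~\ref{Alg_5} is most naturally read as $x_k$ (as in Algorithms~\ref{adaptive_alg3} and~\ref{Alg_50}), exactly as you suggest.
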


\begin{proof}
The proof is similar to the proof of Theorem \ref{theorem_adaptive_Alg_2} with $\displaystyle\frac{\varepsilon}{2} \longrightarrow \delta_{k+1}.$
\end{proof}

The optimality of Algorithm \ref{Alg_5} for the class of convex and $M$-relatively Lipschitz continuous problems can be proved similar to Remark \ref{remarkoptim}.

%%%%%%%%%%%%%%%%%%%%%%%%%%%%%%%%%%%%%%%%%%%%%%%%%%%
%Section Universal Algorithms %%%%%%%%%%%%%%%%%%%%%%%%%%%%%%%%%%%%%%%%%%%

\section{Universal Algorithms for Relatively Smooth and Relatively Lipschitz Continuous Convex Optimization Problems}\label{sect_univers}

In this section, we introduce some analogues of Algorithms \ref{adaptive_alg4} and \ref{Alg_5}, which adjust to the "degree of relative smoothness" of the considered $(\alpha, L, \delta)$-relatively smooth problem. This approach allows the construction of adaptive gradient-type methods that are applicable to both relatively Lipschitz continuous and relatively smooth problems with optimal complexity estimates.

\begin{algorithm}[!ht]
\caption{Universal Method for Relatively Smooth and Relatively Lipschitz Continuous Convex Optimization Problems with Adaptation to Inexactness.}\label{Algor2}
\begin{algorithmic}[1]
   \REQUIRE $\varepsilon > 0, \rrev{x_{0} \in Q}, L_{0}>0,\delta_0>0$, $R$ s.t. $V\left(x_{*}, x_{0}\right) \leqslant R^{2}, \rrev{k=0}.$
   \STATE Set  $k = k+1, L_{k+1}=\frac{L_k}{2}, \delta_{k+1}=\frac{\delta_k}{2}$.
	\STATE Find
	\begin{equation}\label{eqproblem}
	x_{k+1} = \arg\min\limits_{x\in Q}\{\langle \nabla f(x_k),x \rangle + L_{k+1}V(x,x_{k})\}.
	\end{equation}
	\STATE \textbf{If}
    \begin{equation*}\label{condalg2}
    f\left(x_{k+1}\right) \leqslant f\left(x_{k}\right)+\langle  \nabla f\left(x_{k}\right), x_{k+1}-x_{k}\rangle+L_{k+1} (V(x_{k+1}, x_{k})+ V(x_{k}, x_{k+1}))+\delta_{k+1},
    \end{equation*}
    \textbf{then} go to the next iteration (item 1).
    \STATE \textbf{else} 
    $$\text{set }L_{k+1}=2\cdot L_{k+1}, \delta_{k+1}=2\cdot\delta_{k+1} \text{ and go to item 2}.$$
    \STATE \textbf{end if}  
	\ENSURE $\widehat{x} = \frac{1}{S_N}\sum\limits_{k=0}^{N-1}\frac{x_{k+1}}{L_{k+1}}.$  
\end{algorithmic}
\end{algorithm}

\begin{theorem}\label{theorem_Algor2}
Let  $f: Q\longrightarrow \mathbb {R}$ be a convex and $(\alpha, L, \delta)$-relatively smooth function, i.e. \eqref{eqalpha1relsm}, \eqref{eqalpha1relsm1} hold. Then after $N$ iterations of Algorithm \ref{Algor2}, the following inequality holds
\begin{equation*}\label{equat2}
    f(\widehat{x})-f(x_{*})\leqslant \frac{R^2}{S_N}+ \frac{1}{S_N}\sum\limits_{k=0}^{N-1} \frac{\delta_{k+1}}{L_{k+1}},
\end{equation*}
where $S_N = \sum\limits_{k=0}^{N-1}\frac{1}{L_{k+1}}.$
Note that for \color{black}{$L_0 \leqslant 2L$ and $\delta_0 \leqslant 2\delta$} the auxiliary problem \eqref{eqproblem} in Algorithm \ref{Algor2} is solved no more than $2N + \log_2\frac{2L}{L_0}$ times.
\end{theorem}

\begin{proof}
The proof is given in Appendix D.
\end{proof}

The optimality of Algorithm \ref{Algor2} for the class of convex and $M$-relatively Lipschitz continuous problems can be proved similar to Remark \ref{remarkoptim}. The optimal rate of convergence $O(\varepsilon^{-1})$ for the class of $L$-relatively smooth problems also takes place for Algorithm \ref{Algor2}. \alex{For more details see the conclusion of proof in Appendix E, the proof of these facts for Algorithm \ref{Algor2} can be obtained analogously.}

Let us now formulate a variant of the universal method for relatively Lipschitz continuous and relatively smooth problems which makes it possible to prove the guaranteed preservation of the optimal complexity estimates. This method is listed as Algorithm \ref{universal_alg2}, below.
%for the rate of convergence $O\left(\varepsilon^{-2}\right)$

\begin{algorithm}[!ht]
\caption{Universal Method for Relatively Smooth and Relatively Lipschitz Continuous Convex Optimization Problems.}\label{universal_alg2}
\begin{algorithmic}[1]
   \REQUIRE $\varepsilon > 0, \rrev{x_{0}\in Q}, L_{0}>0$, $R$ s.t. $V\left(x_{*}, x_{0}\right) \leqslant R^{2}, \rrev{k = 0}.$
   \STATE Set $k =k+1, L_{k+1}=\frac{L_k}{2}$.
	\STATE Find
\begin{equation*}
	x_{k+1} = \arg\min\limits_{x\in Q}\{\langle \nabla f(x_k),x \rangle + L_{k+1}V(x,x_{k})\}.
	\end{equation*}
	%\IF{
	\STATE \textbf{If}
    \begin{equation*}
    f\left(x_{k+1}\right) \leqslant f\left(x_{k}\right)+\langle  \nabla f\left(x_{k}\right), x_{k+1}-x_{k}\rangle + L_{k+1} ( V(x_{k+1}, x_{k})  V(x_{k}, x_{k+1})) +\frac{3 \varepsilon}{4},
    \end{equation*}
     \textbf{then} go to the next iteration (item 1).
    \STATE \textbf{else}
    $$\text{set }L_{k+1}=2\cdot L_{k+1} \text{ and go to item 2.}$$
    \STATE \textbf{end if}
	\STATE Stopping criterion
	\begin{equation*}
		S_N := \sum\limits_{k=0}^{N-1}\frac{1}{L_{k+1}}\geqslant \frac{4R^2}{\varepsilon}.
	\end{equation*}
	\ENSURE $\widehat{x} = \frac{1}{S_N}\sum\limits_{k=0}^{N-1}\frac{x_{k+1}}{L_{k+1}}$.
\end{algorithmic}
\end{algorithm}

%\rrev{$L_0\leq\frac{2M^2}{\varepsilon}$}

\begin{theorem}\label{ThmUnivMeth2}
\rev{Let  $f: Q\longrightarrow \mathbb {R}$ be a convex and $(\alpha, L, \delta)$-relatively smooth function, i.e. \eqref{eqalpha1relsm} and \eqref{eqalpha1relsm1} hold with $\delta \leqslant \frac{3\varepsilon}{4}$, \rrev{$L_0\leqslant 2L$}. Then after the stopping of Algorithm  \ref{universal_alg2}, the following inequality holds $f(\widehat{x})-f(x_*) \leqslant\varepsilon.$ The number of iterations of Algorithm \ref{universal_alg2} does not exceed $\displaystyle \left\lceil\frac{8LR^2}{\varepsilon}\right\rceil$. If $f$ is $\left(1, \frac{2M^2}{\varepsilon}, \frac{\varepsilon}{2}\right)$-relatively smooth function (for example, $M$-relatively Lipschitz continuous function) then the number of iterations of Algorithm \ref{universal_alg2} does not exceed $\left\lceil\displaystyle\frac{16M^2R^2}{\varepsilon^2}\right\rceil$.}
\end{theorem}

\begin{proof}
The proof is given in Appendix E.
\end{proof}
\textcolor{black}{
\begin{remark}
    It is worth noting that, generally speaking, for Algorithms \ref{adaptive_alg4}--\ref{universal_alg2} it is acceptable to use the following output point
    \begin{equation*}
        \widehat{x} = \arg\min\limits_{i\in\{0,\ldots,N+1\}} f(x_i).
    \end{equation*}
At the same time for various applied problems, such a modification can both improve and degrade the practical quality of the algorithms. 
\end{remark}}

%%%%%%%%%%%%%%%%%%Section  Numerical Experiments  %%%%%%%%%%%%%%%%%%%%%%%%%%%%%%%%%%%%%%%%%%%%%%%%%%
\section{Numerical Experiments}\label{experiments_Alg5}
In this section, in order to demonstrate the performance of the proposed Algorithms,  we  firstly consider some numerical experiments concerning the Intersection of Ellipsoids Problem (IEP). Secondly, we compare  the proposed Algorithm \ref{Alg_5} with AdaMirr algorithm, which was recently proposed in \cite{AdaMirr_2021}.  We also, consider some numerical experiments concerning the Support Vector Machine (SVM) \cite{Lu,pegasos_2011}.

%we  firstly  compare  the proposed Algorithm \ref{Alg_5} with AdaMirr algorithm, which was recently proposed in \cite{AdaMirr_2021}. We also, consider some numerical experiments concerning the Intersection of Ellipsoids Problem (IEP) and  Support Vector Machine (SVM) \cite{Lu,pegasos_2011}. 

All experiments were implemented in Python 3.4, on a computer fitted with Intel(R) Core(TM) i7-8550U CPU @ 1.80GHz, 4 Core(s), 8 Logical Processor(s). The RAM of the computer is 8 GB.

\subsection{The Intersection of Ellipsoids Problem (IEP)}\label{IEP}
For the Intersection of Ellipsoids Problem, \alex{supposing, that the intersection is nonempty}, we compute a point $x \in \mathbb{R}^n$ in the intersection of $m$ ellipsoids, i.e.

%$x \in Q \subseteq \mathbb{R}^n$ \moh{($Q$ is a closed convex set)} in the intersection of $m$ ellipsoids, i.e.

\begin{equation*}
    x \in \mathcal{E} = \mathcal{E}_1 \cap \mathcal{E}_2 \cap \ldots \cap \mathcal{E}_m, 
\end{equation*}
where $\mathcal{E}_{i}=\left\{ \rrev{x \in \mathbb{R}^n}: \frac{1}{2} x^{T} A_{i} x+ \rrev{b_{i}^T x}+c_{i} \leqslant 0\right\}$, $A_i \in \mathbb{R}^{n\times n}$ is a given symmetric positive semi-definite matrix, $b_i \in \mathbb{R}^n, c_i \in \mathbb{R}$ are given, for every $i =1, \ldots, m$. We note that the Intersection of Ellipsoids Problem is equivalent to the following unconstrained optimization problem 
\begin{equation}\label{objective_IEP}
    \rev{\min\limits_{x\in \mathbb{R}^n} \left\{ f(x) := \max\limits_{\rrev{1\leqslant i\leqslant m}} \left[
    \frac{1}{2} x^{T} A_{i} x + \rrev{b_{i}^T x}+c_{i} \right] \right\}.}
\end{equation}
The objective function $f$ in \eqref{objective_IEP} is both non-differentiable and non-Lipschitz  %\alex{if $Q$ is not compact. Even for convex compact set $Q$ the values of the Lipschitz constants of such a function can be extremely large.}
\moh{\cite{Lu}}. So the traditional first-order methods are not applicable to such types of problems. We will demonstrate, how the proposed Algorithm \ref{Alg_5} can be applied to solve such a problem (here we take more attention to the Algorithm \ref{Alg_5}, because it works better than Algorithms \ref{adaptive_alg4} and \ref{Algor2}, see Fig. \ref{results_alg345_IEP}).

%(note that we can use the other proposed algorithms, but in practice, we found that the Algorithm \ref{Alg_5} is the best). 

Let $\sigma:=\max\limits_{1 \leqslant i \leqslant m}\left\|A_{i}\right\|_{2}^{2}$ where $\left\|A_{i}\right\|_{2}$ is the \rrev{spectral norm} of $A_i$,\\
$\rho :=\max\limits_{1 \leqslant i \leqslant m}\left\|A_i b_i \right\|_{2}$ and $\gamma:=\max\limits_{1 \leqslant i \leqslant m}\left\|b_i \right\|_{2}^2$.  We run Algorithm \ref{Alg_5} with the following prox function 
\begin{equation}\label{prox_function_IEP}
d(x):=\frac{a_2}{4}\|x\|_{2}^{4}+\frac{a_1}{3}\|x\|_{2}^{3}+\frac{a_0}{2}\|x\|_{2}^{2},
\end{equation}
where $a_0 = \gamma, a_1 = \rho, a_2 = \sigma$ (see \cite{Lu} for more details). The objective function $f$ \eqref{objective_IEP} is $1$-relatively Lipschitz continuous with respect to the prox function $d(\cdot)$, defined in \eqref{prox_function_IEP} \rrev{\cite{Lu}}. The Bregman divergence $V(\cdot, \cdot)$ for the corresponding prox function $d(\cdot)$ is defined as follows
\begin{equation}\label{Bregmann_IEP}
    V(y,x) = a_0 V_{d_0}(y,x) + a_1 V_{d_1}(y,x) + a_2 V_{d_2}(y,x).  
\end{equation}
\rrev{where $d_i(x) = \frac{1}{i+2}\|x\|_2^{i+2} \; (i=0, 1, 2)$,} and 
$$
    V_{d_i}(y,x) = \frac{1}{i+2} \left(\|y\|_{2}^{i+2}+(i+1) \|x\|_{2}^{i+2}-(i+2)\|x\|_{2}^{i}\langle x, y\rangle\right)\; (i = 0, 1, 2).
$$
Note that each iteration of  Algorithm \ref{Alg_5} requires the capability to solve the subproblem \eqref{subproblem_alg5}, which is equivalent to the following linearized problem
\begin{equation}\label{equivalent_subproblem_alg5}
	x_{k+1} = \arg\min\limits_{x\in \mathbb{R}^n }\{\langle c_k,x \rangle + d(x)\},
\end{equation}
where $c_k = \frac{1}{L_{k+1}} \nabla f(x_k) -\nabla d(x_k)$ and $d(x)$ is given in \eqref{prox_function_IEP}. The solution of the problem \eqref{equivalent_subproblem_alg5} can be found explicitly
\begin{equation*}
    x_{k+1} = -\theta_k c_k,
\end{equation*}
for some $\theta_k \geqslant 0$, where $\theta_k$ is a positive real root of the following cubic equation %(this root is calculated by the  np.root() function, which is a ready function in NumPy to calculate the roots of a polynomial with given coefficients)
\begin{equation*}\label{cubic_eq_IEP}
    \gamma \theta + \rho\|c_k\|_2 \theta^2 + \sigma \|c_k\|_2^2 \theta^3 -1 = 0.
\end{equation*}

We run Algorithm \ref{Alg_5} with different values of $n$  and prox-function \eqref{prox_function_IEP} and the starting point $x_0 = \left(0.2, \ldots, 0.2\right) \in \mathbb{R}^n$ and not in $\mathcal{E}$.  \rrev{The matrices $A_i$, for every $i = 1, \ldots, m$,  are diagonal matrices with entries  chosen randomly from the uniform distribution over $(0, 1)$}, the vectors $b_i$ and the constants $c_i$, are also chosen randomly from a normal (Gaussian) distribution with mean (center) equaling $0$ and standard deviation (width) equaling $0.1$. \rrev{ We generated the random data 5 times and averaged the results of algorithms that we received each time, such that the $\mathbf{0} \in \mathbb{R}^n \in \mathcal{E}$.} We considered $L_0 = \frac{\|\nabla f(1,0,\ldots,0) - \nabla f(0,1,0, \ldots, 0)\|_2}{\sqrt{2}}, \delta_0 = 0.5$, and $R^2 = \frac{3\sigma}{4}\|x_0\|_2^3 + \frac{2\rho}{3}\|x_0\|_2^3 + \frac{\gamma}{2}\|x_0\|_2^2$ (see the proof of the Proposition 5.4 in \cite{Lu}).

The results of the work of Algorithm \ref{Alg_5} for IEP are presented in Fig \ref{results_alg4_IEP}, below.
These results demonstrate the running time of the algorithm in seconds as a function of the number of iterations, and the quality of the solution "Estimate", which is in fact the right side of inequality \eqref{estimate_alg5}.

We note that the quality of the solution of the problem, which is produced by Algorithm \ref{Alg_5}, grows sharply at the beginning of the work of the algorithm for $k=100$ to $10\,000$. \rrev{We improve the quality of the initial solution by two orders of magnitude on average.} Nevertheless, %the growth of the quality of the solution slows down significantly with the growth of the number of iterations from
\rrev{the rate of convergence significantly decreases when the number of iterations goes from $k=15\,000$ to $100\,000$.}

\begin{figure}[htp]
	\minipage{0.50\textwidth}
	\includegraphics[width=\linewidth]{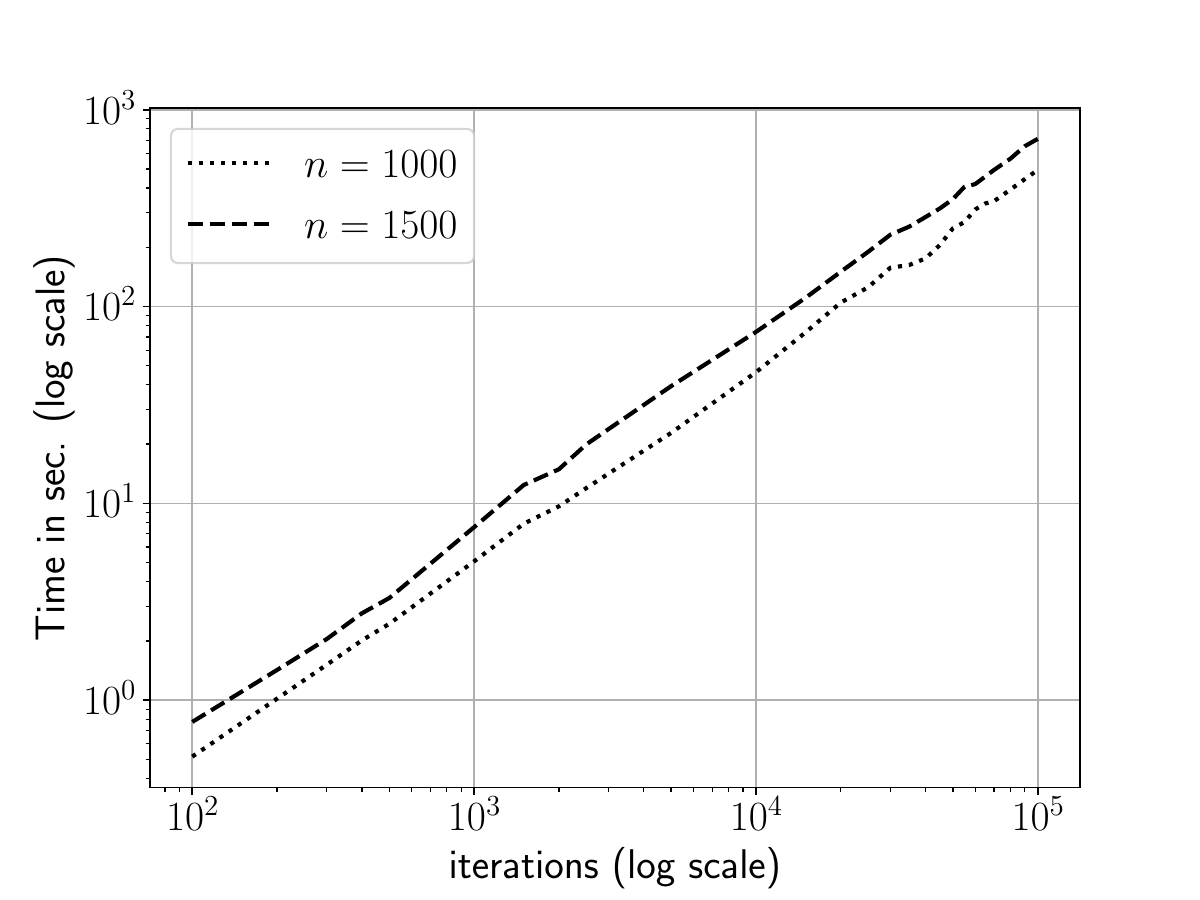}
	\endminipage\hfill
	\minipage{0.50\textwidth}
	\includegraphics[width=\linewidth]{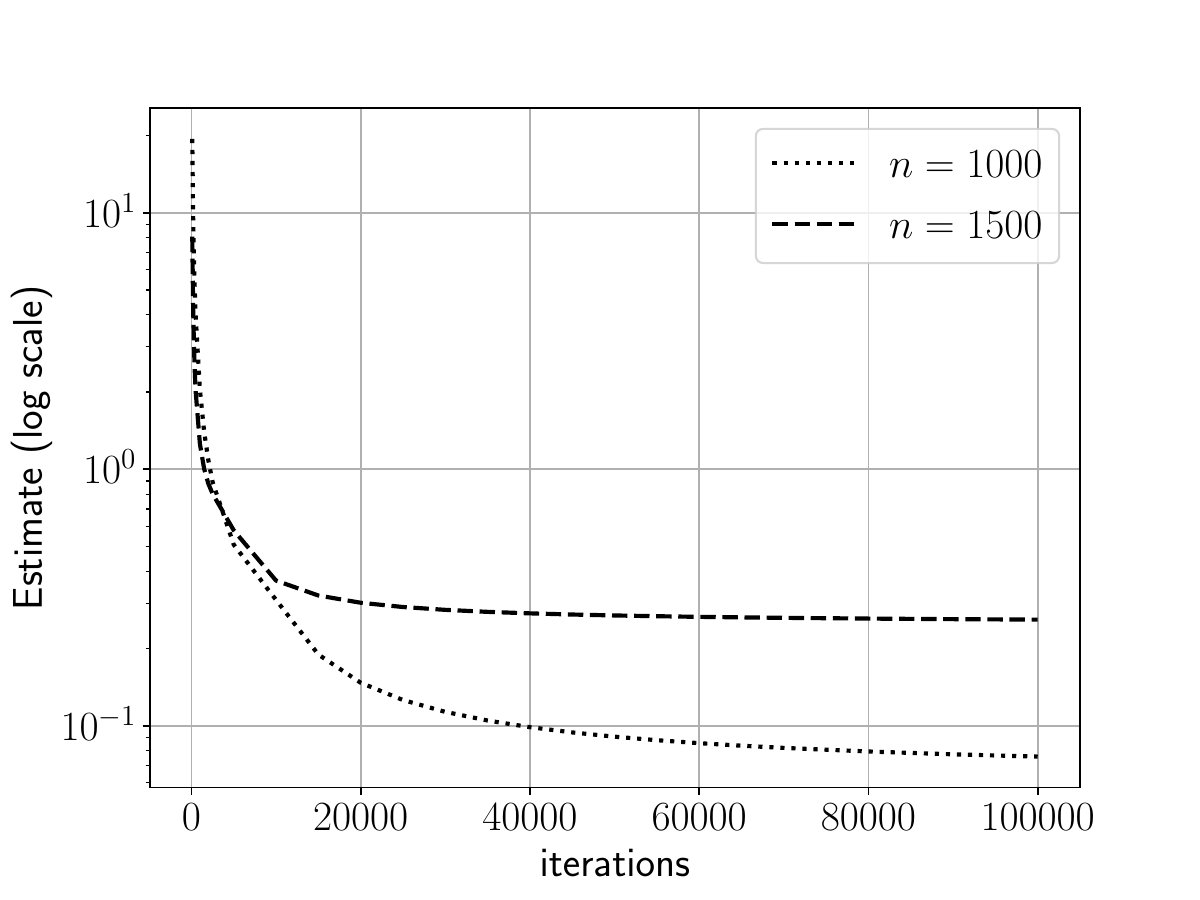}
	\endminipage\hfill
	\caption{The results of Algorithm \ref{Alg_5} for IEP with different values of $n$  and $m = 10$.}
	\label{results_alg4_IEP}
\end{figure}

\begin{figure}[htp]
	\minipage{0.50\textwidth}
	\includegraphics[width=\linewidth]{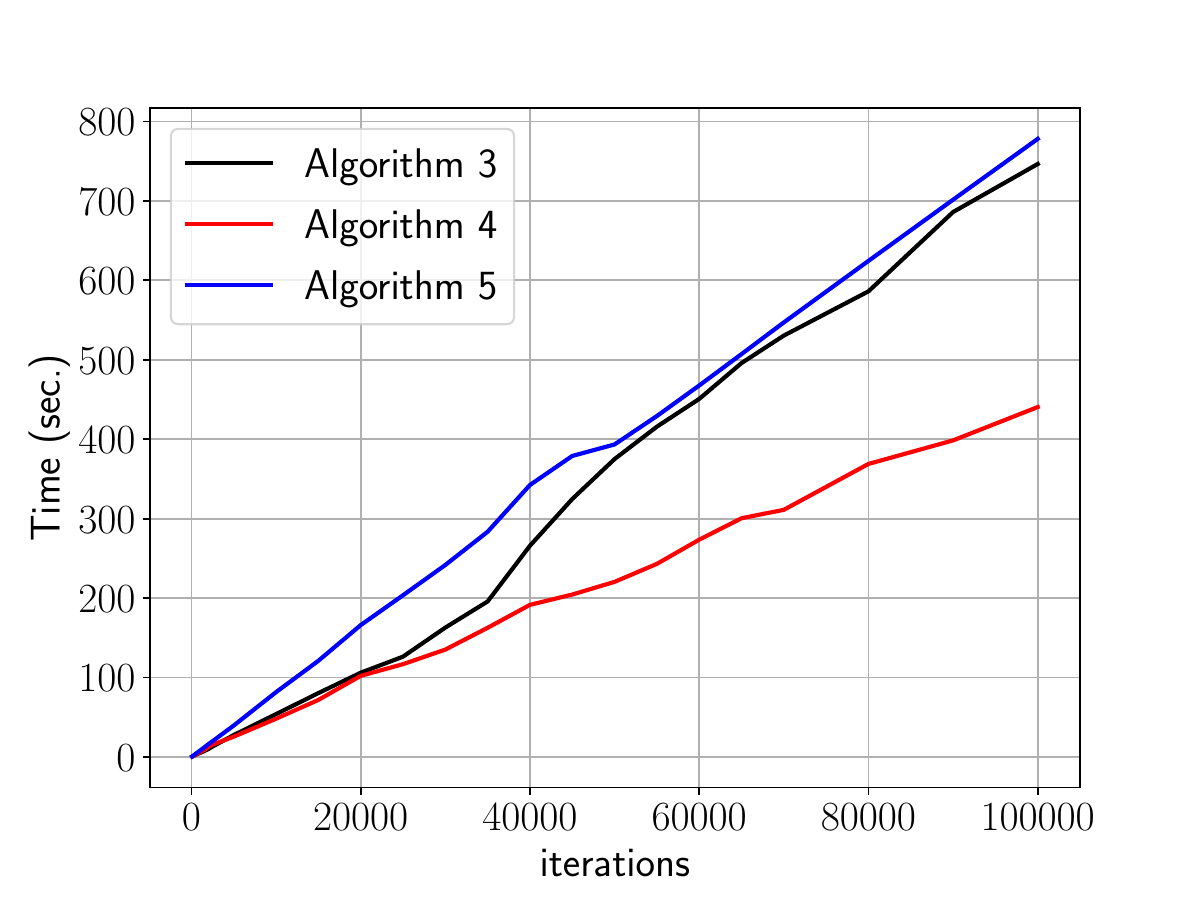}
	\endminipage\hfill
	\minipage{0.50\textwidth}
	\includegraphics[width=\linewidth]{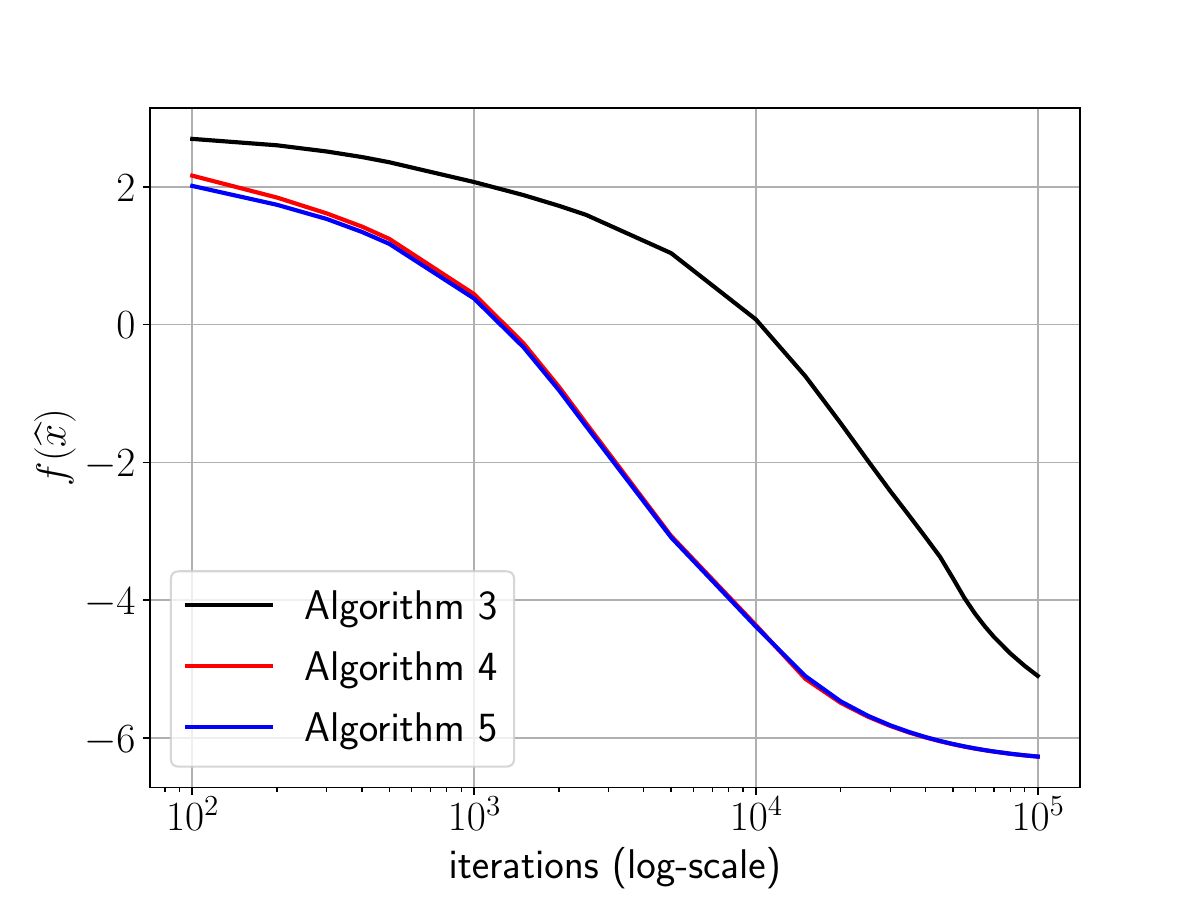}
	\endminipage\hfill
	\caption{The results of Algorithms \ref{adaptive_alg4}, \ref{Algor2} and \ref{Alg_5} for IEP with $n=1000$  and $m = 10$.}
	\label{results_alg345_IEP}
\end{figure}

%\begin{table}[!ht]
%	\centering
%	\caption{The results of Algorithm \ref{Alg_5} for IEP with different values of $n$  and $m = 10$.}
%	\label{table_alg5_2}
%	\begin{tabular}{|c|c|c|c|c|}
%		\hline
%		\multirow{2}{*}{} & \multicolumn{2}{c|}{$n =1000$} & \multicolumn{2}{c|}{$n = 1500$} \\ \cline{2-5} 
%		$k$	 & Time (sec.) &Estimate & Time (sec.) & Estimate\\ \hline
%		100	 &1.466 &221.4589 &1.751 &68.4040 \\ 
%		200	 &2.265 &111.4047 &3.375 &21.8705  \\ 
%		300	 &3.452 &74.4762 &4.936 &11.4712 \\ 
%		400  &4.900 &56.3878 &6.581 & 7.8800 \\ 
%		500	 & 5.803 &45.1152 &8.915 &6.0600 \\ 
%		1000 &12.863 &17.1154 &16.276 &2.9899 \\ 
%		2500 &30.610 &5.3165 &42.192 & 1.4373\\ 
%		5000 &59.891 &2.5314 &78.378 & 0.9596\\
%		10000&125.569 &1.2858 &174.444 &0.7276 \\
%		15000&179.651 &0.8899 &277.783 &0.6513 \\
%		20000&234.114 &0.6954 &347.313 & 0.6133\\ 
%		25000&304.119 &0.5797 &445.681 & 0.5905\\
%		30000&357.322 &0.5031 &515.057 & 0.5754\\
%		35000&406.205 &0.4485 &617.556 & 0.5646\\
%		40000&469.334 &0.4077 &676.006 &0.5565 \\
%		45000&536.950 &0.3761 &701.034 & 0.5502\\  
%		50000&583.905 &0.3508 &780.465 & 0.5452 \\ 
%		55000&650.166 &0.3302 &825.866 & 0.5411\\
%		60000&660.542 &0.3130 &832.798 & 0.5376\\
%		65000&708.685 &0.2984 &888.845 &0.5347 \\
%		70000&711.563 &0.2860 &895.642 &0.5323 \\
%		80000&721.268 &0.2658 &1007.684 &0.5281 \\ 
%		90000&740.335 &0.2501 &1024.745 &0.5251 \\  
%		100000&829.853 &0.2375 &1087.373 &0.5226 \\\hline  
%	\end{tabular}
%\end{table}

%%%%%%%%%%%%%%%%%%%%%%%%%%%%%%%%%%%%%%%%
\subsection{Comparison with AdaMirr} 
Recently in \cite{AdaMirr_2021}, \rrev{there was proposed} an adaptive first-order method, called AdaMirr, in order to solve the relatively continuous and relatively smooth optimization problems. \rrev{AdaMirr briefly can be stated as
\begin{equation*}
    x_{k+1} = \arg\min\limits_{x\in Q}\{\langle - \gamma_k \nabla f(x_k),x_k - x \rangle + V(x,x_k)\}, \quad k = 1, 2, \ldots.
\end{equation*}
with $\gamma_k$ defined as
$$
\gamma_k = \frac{1}{\sqrt{\sum_{s=0}^{k-1} \delta_s^2}} \quad \text {with} \quad  \delta_s^2=\frac{V\left(x_s, x_{s+1}\right)+V\left(x_{s+1}, x_s\right)}{\gamma_s^2}, \quad k = 1, 2, \ldots
$$
and $\delta_0 = \sqrt{V(x_0,x_1) + V(x_1,x_0)}$. 
In \cite{AdaMirr_2021}, it was proved that for $M$-relatively Lipschitz continuous convex function, then after $N$ steps of AdaMirr, the following inequality holds
\begin{equation}\label{estim_adamirr}
f\left(\overline{x}_N\right)-f^* \leqslant \frac{\sqrt{2} M\left[D_1+ \frac{8 M^2}{\delta_0^2} +2 \ln \left(1 + \frac{2 M^2 N}{\delta_0^2} \right)\right]}{\sqrt{N}}+\frac{3 \sqrt{2} M + \frac{4 M^2}{\delta_0^2}}{N},
\end{equation}
where $\overline{x}_N = \frac{1}{N} \sum_{k=1}^{N} x_k$ and $D_1 = V(x_*, x_1)$.
}

\rev{In this subsection,} we compare the proposed Algorithm \ref{Alg_5} with AdaMirr, for the Intersection of Ellipsoids Problem  (see Subsec. \ref{IEP}). We run the compared algorithms for the same parameters and setting \rrev{as in the Subsec.} \ref{IEP}. The results of the comparison are presented in Fig. \ref{fir_n1000}, which illustrates  the value of the objective function at the output point of each algorithm, the estimates of the quality of the solution for Algorithm \ref{Alg_5} (see the right side of inequality \eqref{estimate_alg5}) and AdaMirr (see the right side of inequality \eqref{estim_adamirr}) and the running time of algorithms in seconds.

From the results in Fig. \ref{fir_n1000}, we can see that the proposed Algorithm \ref{Alg_5} works better than AdaMirr, except for the running time, where AdaMirr works faster because, in the proposed Algorithm \ref{Alg_5}, there is an adaptive procedure for the Lipschitz continuity parameter, which needs more time. Note that AdaMirr does not converge to the solution of the problem for all taken iterations from $100$ to $10^4$. 
\begin{figure}[htp]
	\minipage{0.32\textwidth}
	\includegraphics[width=\linewidth]{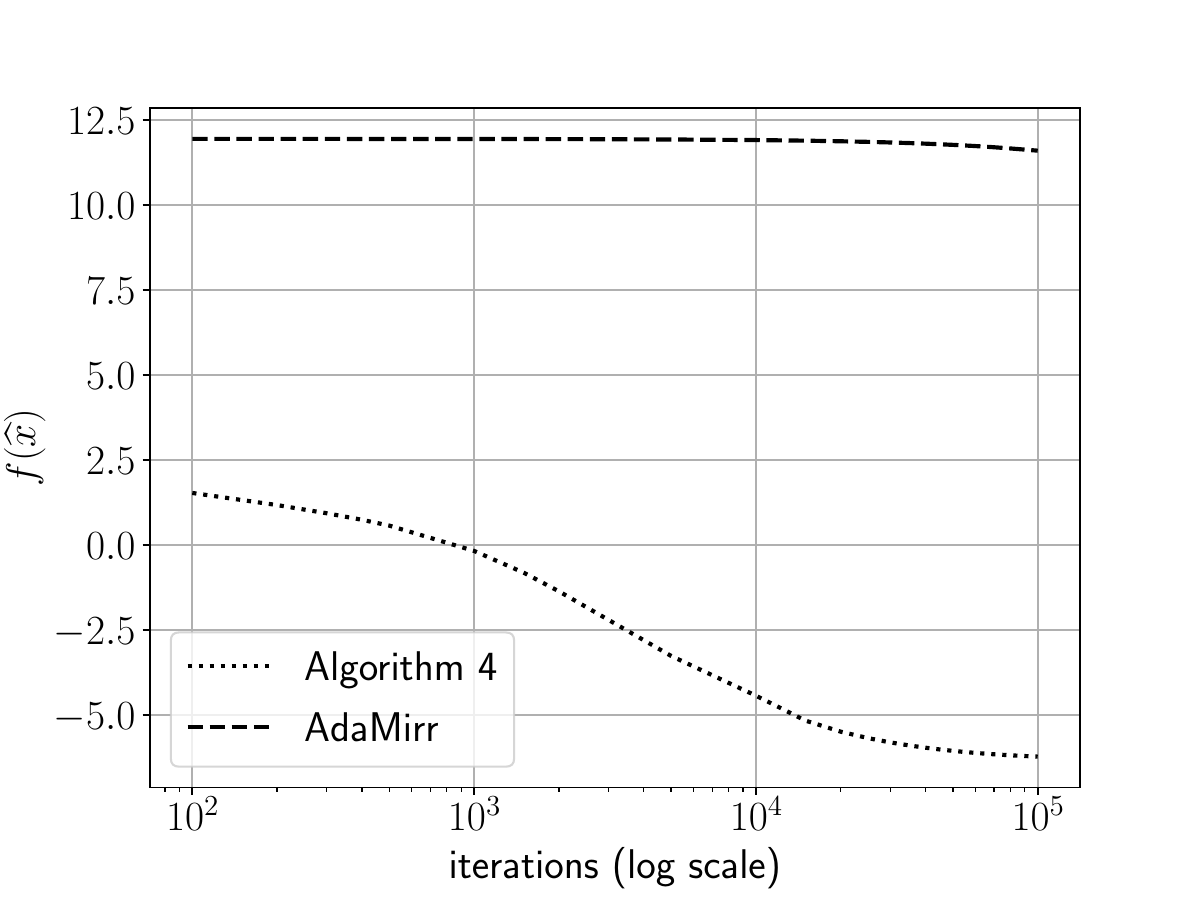}
	\endminipage\hfill
	\minipage{0.32\textwidth}
	\includegraphics[width=\linewidth]{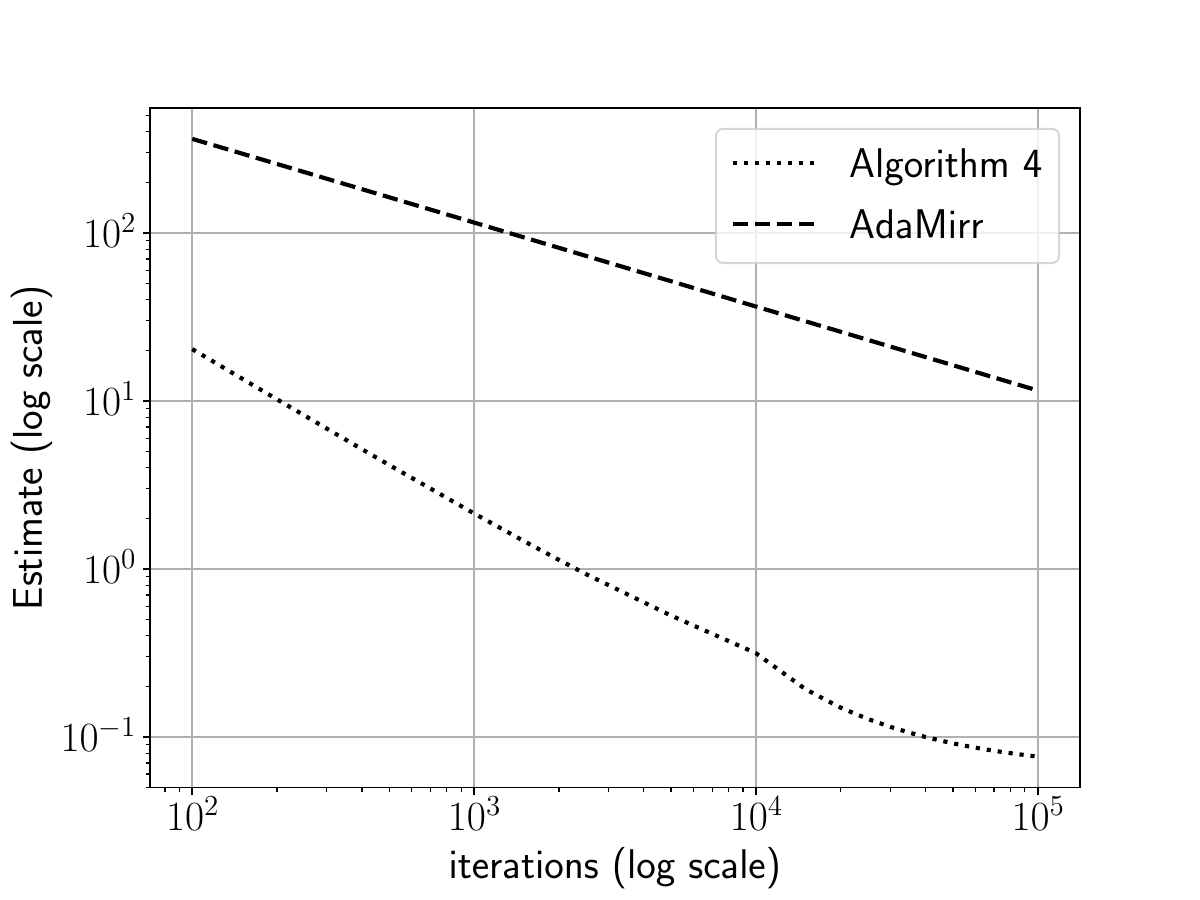}
	\endminipage\hfill
	\minipage{0.32\textwidth}%
	\includegraphics[width=\linewidth]{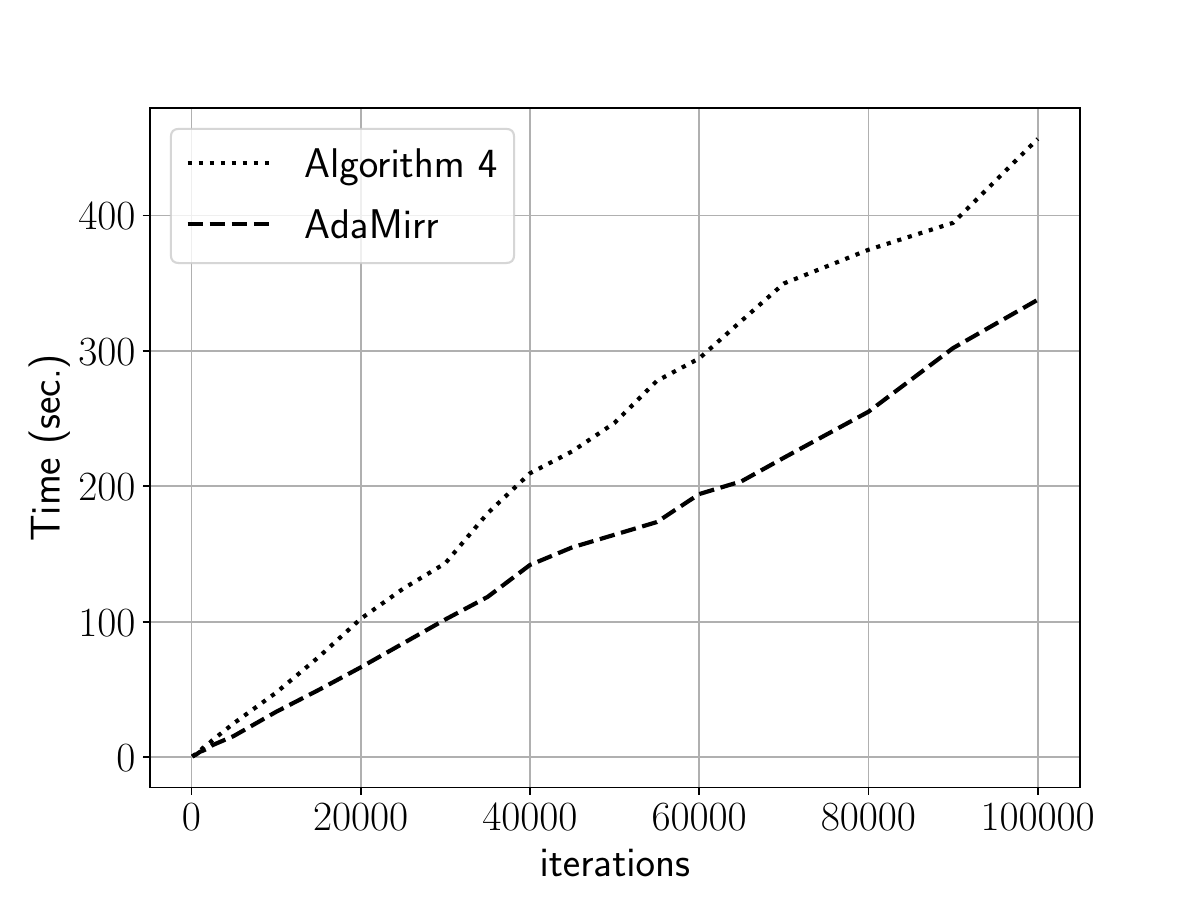}
	\endminipage
	\caption{The results of comparison of Algorithm \ref{Alg_5} and AdaMirr for IEP with $n = 1000$ and $m=10$.}
	\label{fir_n1000}
\end{figure}

%\begin{figure}[htp]
	%\minipage{0.32\textwidth}
	%\includegraphics[width=\linewidth]{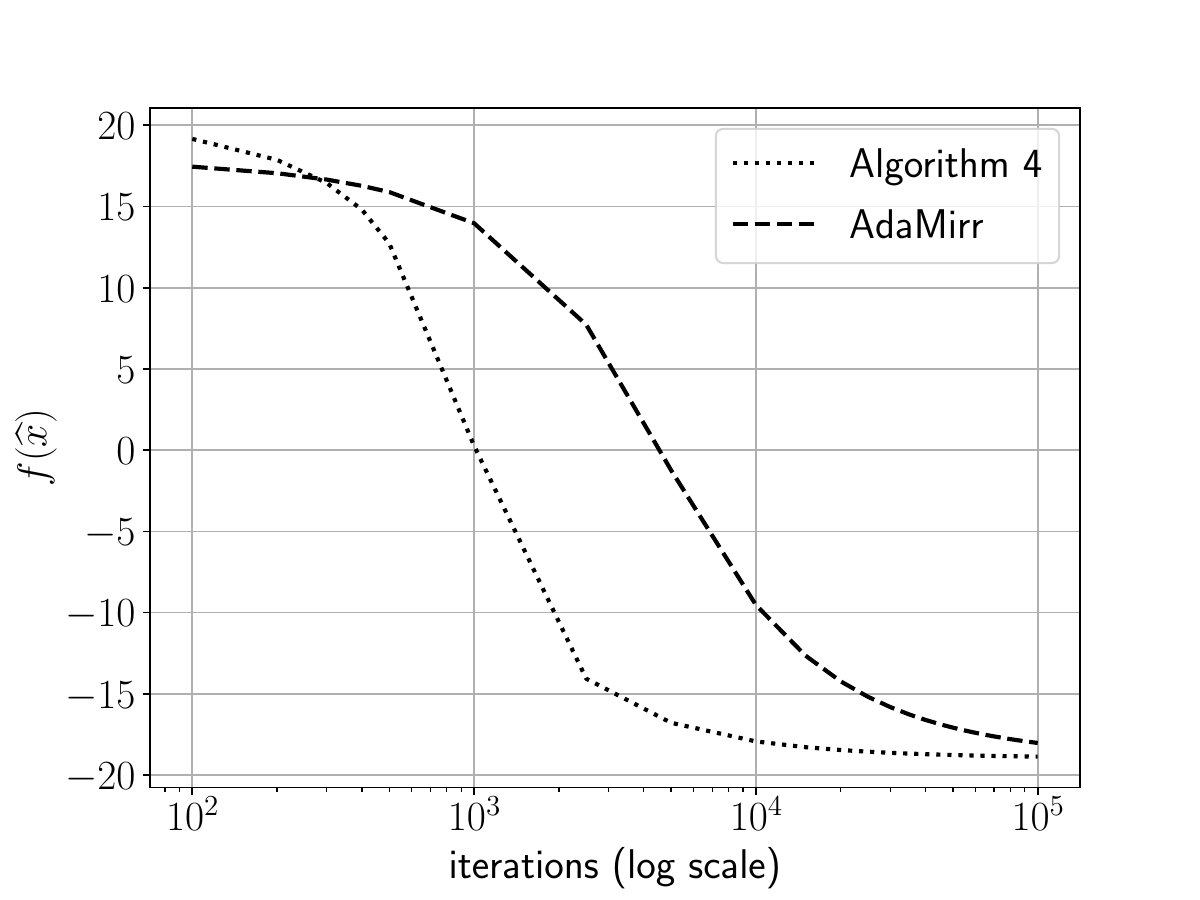}
	%\endminipage\hfill
	%\minipage{0.32\textwidth}
	%\includegraphics[width=\linewidth]{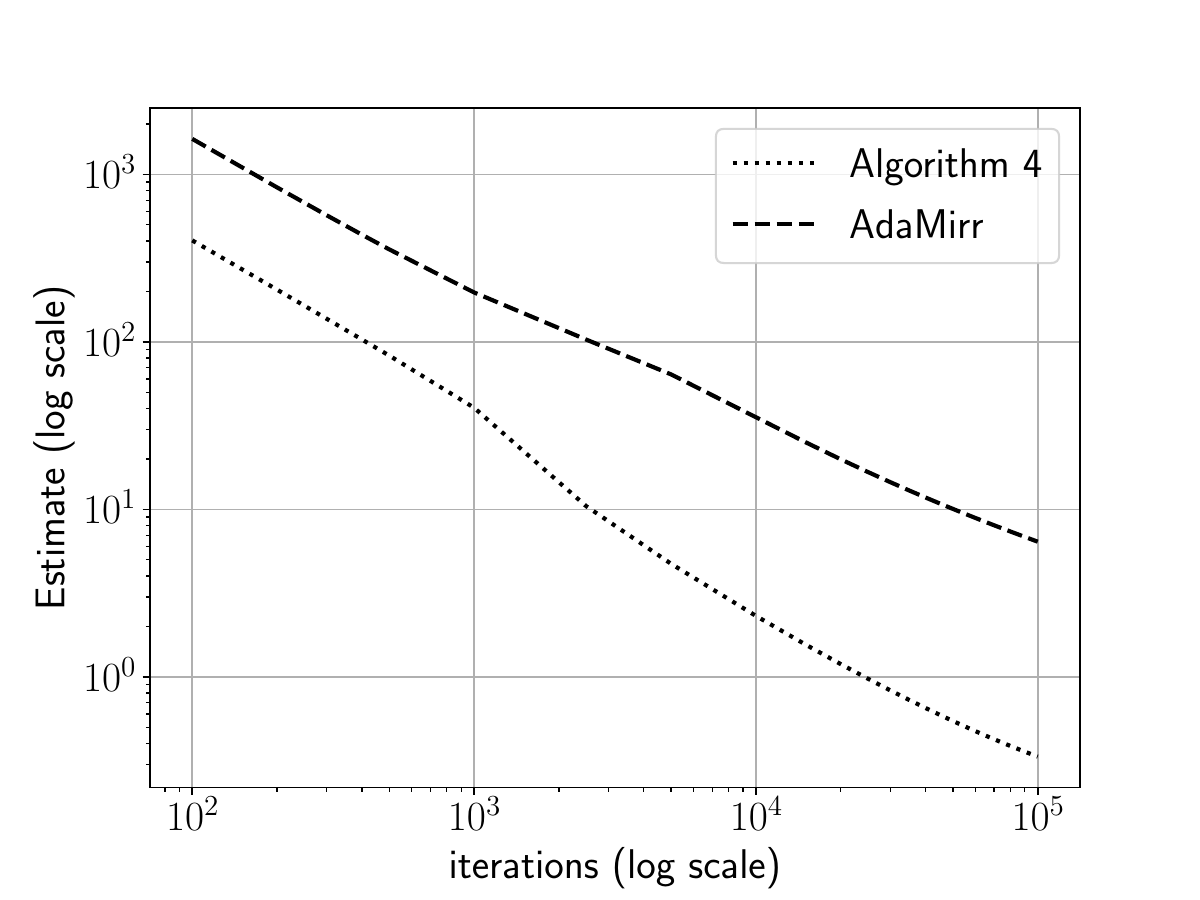}
	%\endminipage\hfill
	%\minipage{0.32\textwidth}%
	%\includegraphics[width=\linewidth]{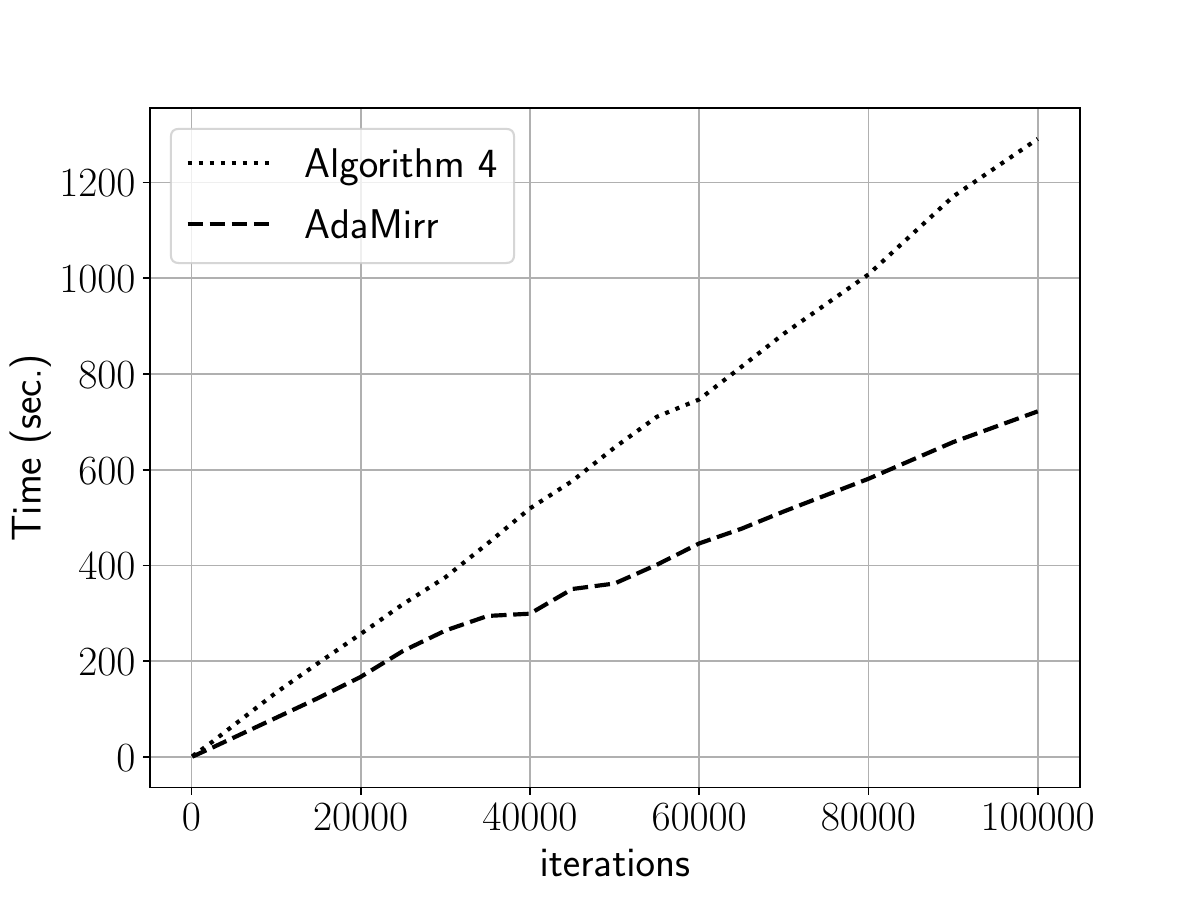}
	%\endminipage
	%\caption{The results of comparison of Algorithm \ref{Alg_5} and AdaMirr for IEP with $n = 1500$ and $m=10$.}
	%\label{fir_n1500}
%\end{figure}

%%%%%%%%%%%%%%%%%%%%%%%%%%%%%%%%%%%%%%%%%
\subsection{Support Vector Machine (SVM) and Inequality-Type Function Constraints}
The Support Vector Machine (SVM) is an important supervised
learning model for binary classification problem \cite{pegasos_2011}. 
The SVM optimization problem can be formulated as follows
\begin{equation}\label{objective_SVM}
\rev{\min\limits_{x\in \widetilde{Q} } \left\{ f(x) := \left(\frac{1}{n}\sum_{i=1}^{n}\max\{0, 1- y_i x^\top w_i\}\right) +\frac{\moh{\tau}}{2}\|x\|_2^2\right\}, }
\end{equation}
where $w_i$ is the input feature vector of sample $i$ and $y_i \in \{-1, 1\}$ is the label of sample $i$, \rrev{$\tau>0$ } is the regularization parameter, and $\widetilde{Q}$ is a compact convex set. The objective function in \eqref{objective_SVM} is non-differentiable \rev{and because of the existence of the $\ell_2$-norm regularization the value of the Lipschitz constant of such a function can be extremely large.} Thus, we cannot always directly use typical subgradient or gradient schemes to solve the problem \eqref{objective_SVM}. 
The problem of constrained (inequality-type constraints) minimization of convex functions attracts widespread interest in many areas of modern large-scale optimization and its applications \cite{Robust_Truss,Shpirko_2014}. Therefore, we demonstrate the performance of the proposed Algorithm \ref{adaptive_alg3} %and \ref{Alg_50} 
 for such class of problems. We consider an example of the Lagrange saddle point problem induced by the function $f$ in the problem \eqref{objective_SVM}, with some inequality-type function constraints. This problem has the following form
\begin{equation}\label{problem_min_for_SPP}
\min_{x \in  \widetilde{Q}  } \left\{f(x) \left| \; \varphi_p(x):=\sum_{i=1}^n \alpha_{pi}x_i^2 - \beta_p \leqslant 0 \right., \; p=1,...,m \right\},
\end{equation}
where $\rrev{\alpha_{pi}>0}, \beta_p \in \mathbb{R}, \, \forall i=1, \ldots, n$ and $ \forall p = 1, \ldots, m$. The corresponding Lagrange saddle point problem is defined as follows
\begin{equation*}\label{lagrangeSVM}
\min_{x \in \widetilde{Q}} \max_{\boldsymbol{\lambda} = (\lambda_1,\lambda_2,\ldots,\lambda_m)^\top \in \widehat{Q} \subset \mathbb{R}^m_+} \left\{ L(x,\boldsymbol{\lambda}):=f(x)+\sum\limits_{p=1}^m\lambda_p\varphi_p(x)\right\},
\end{equation*}
where $\widehat{Q} $ is a compact convex set. This problem is  equivalent to the variational inequality with the following monotone bounded operator
$$
G(x,\boldsymbol{\lambda})=
\begin{pmatrix}
\nabla f(x)+\sum\limits_{p=1}^m\lambda_p\nabla\varphi_p(x), \\
(-\varphi_1(x),-\varphi_2(x),\ldots,-\varphi_m(x))^\top
\end{pmatrix},
$$
where $ \nabla f$ and $\nabla\varphi_p$ are subgradients of $f$ and $\varphi_p$.

We run Algorithm \ref{adaptive_alg3} with the following prox function
\begin{equation*}\label{prox_function_svm}
d(x, \boldsymbol{\lambda}):=\frac{a_2}{4}\|x\|_{2}^{4}+\frac{a_1}{3}\|x\|_{2}^{3}+\frac{a_0}{2}\|x\|_{2}^{2} + \frac{1}{2}\|\boldsymbol{\lambda}\|_2^2, \quad \forall x \in \mathbb{R}^n, \boldsymbol{\lambda} \in \mathbb{R}_+^m
\end{equation*}
with
\begin{equation}\label{ai_SVN}
a_0 = \frac{1}{n}\sum_{i=1}^{n}\|w_i\|_2^2, \quad a_1 =\frac{2{\moh{\tau}}}{n}\sum_{i=1}^{n}\|w_i\|_2, \quad a_2 = {\moh{\tau}}^2,
\end{equation}
and the following Bregman divergence
\begin{equation}\label{bregman_div_svm}
    V_{\text{new}}\left( \left(y, \boldsymbol{\lambda} \right) ,  \left(x, \boldsymbol{\lambda}^{\prime}\right) \right) = V(y,x) +\frac{1}{2}\|\boldsymbol{\lambda} - \boldsymbol{\lambda}^{\prime}\|_2^2,
\end{equation}
for every $x, y \in \mathbb{R}^n, \boldsymbol{\lambda}, \boldsymbol{\lambda}^{\prime} \in \mathbb{R}_+^m$,  where $V(y,x)$ is given in \eqref{Bregmann_IEP} with coefficients defined in \eqref{ai_SVN}. We consider the ball $\widetilde{Q} \subset \mathbb{R}^n$ at the center $\textbf{0} \in \mathbb{R}^n$ and the radius  $r = \min \left\{\frac{1}{n \tau} \sum\limits_{i=1}^n\left\|w_i\right\|_2, \sqrt{ \frac{2}{\tau} }\right\}$ (see \cite{Lu}). We take the initial point $(x_0, \boldsymbol{\lambda}_0) \in \mathbb{R}^{n+m}$, with all coordinates equaling $0.01$. \rrev{The coefficients $\alpha_{pi}$ in \eqref{problem_min_for_SPP}, and the vectors $w_i$ for $i = 1, \ldots, n$ are chosen randomly from the uniform distribution over $[0, 1)$,} and $\beta_p = r\, (\forall p = 1, \ldots, m)$. We also consider $\widehat{Q} = \{\boldsymbol{\lambda} \in \mathbb{R}^m_+: \|\boldsymbol{\lambda}\|_2^2 \leqslant r^2 \}$, $L_0 = \frac{\|G((1,0,\ldots,0), \textbf{0} ) - G((0,1,0,\ldots,0), \textbf{0} )\|_2}{\sqrt{2}},$ where $\textbf{0} \in \mathbb{R}^m$, $\delta_0 = 0.5,$ and ${\moh{\tau}} = 0.5$ in \eqref{objective_SVM}.
In order to estimate the parameter $R$, for the Bregman divergence \eqref{bregman_div_svm}, we have (see \cite{Lu})
\begin{equation*}
    \begin{aligned}
     V_{\text{new}}\left( \left(x, \boldsymbol{\lambda} \right) ,  \left(x_0, \boldsymbol{\lambda}_0\right) \right) & \leqslant \frac{a_2}{4} \|x-x_0\|_2^2 \left(\|x+x_0\|_2^2 + 2\|x_0\|_2^2\right) 
     \\&\;\;\;\; + \frac{a_1}{3}\|x-x_0\|_2^2 \left(\|x\|_2^2 + 2\|x_0\|_2^2\right) + \frac{a_0}{2}\|x-x_0\|_2^2 
     \\& \;\;\;\; + \frac{1}{2} \|\boldsymbol{\lambda} - \boldsymbol{\lambda}_0 \|_2^2.
    \end{aligned}
\end{equation*}
Therefore in $Q := \widetilde{Q} \times \widehat{Q}$, we have
$$
\begin{aligned}
 &{V_\text{new}}\left( \left(x, \boldsymbol{\lambda} \right) ,  \left(x_0, \boldsymbol{\lambda}_0\right) \right)  \\ &\;\;\;\; \leqslant \left(r + \|x_0\|_2\right)^2 \left[\frac{a_2}{4} \left(r^2 + 2r\|x_0\|_2 + 3 \|x_0\|_2^2\right)  + \frac{a_1}{3} \left(r^2 + 2\|x_0\|_2^2\right) + \frac{a_0}{2}  \right] 
 \\& \;\;\;\; \;\;\;\; + \frac{1}{2} (r + \|\lambda_0\|_2)^2 := v. 
\end{aligned}
$$
Thus we can take $R = \sqrt{v}$. In each iteration of Algorithm \ref{adaptive_alg3},  solving the sub-problem \eqref{minVI}, for the problem \eqref{problem_min_for_SPP}, will be automatically (not explicitly as was for IEP in the previous subsections). 

The results of the work of Algorithm \ref{adaptive_alg3}, for $n = 25, m = 5$, are presented in Fig. \ref{results_alg12_SVM}. These results demonstrate the number of the iterations of Algorithm \ref{adaptive_alg3}, as a function of $\varepsilon \in \{ i^{-1}, i=2, 4, 8, 12, 16, 20\}$. %The results in Fig. \ref{results_alg12_SVM} (in the right) demonstrate the estimate of the achieved solution by Algorithm \ref{Alg_50}, which is in fact the right side of \eqref{estimate_alg50}.
 As it is known, for the variational inequality with a non-smooth operator, the theoretical complexity estimate $O\left(\varepsilon^{-2}\right)$ is optimal. But experimentally we can see from Fig. \ref{results_alg12_SVM} that, the proposed Algorithm \ref{adaptive_alg3}, has iteration complexity nearly to $O\left(\varepsilon^{-1}\right)$, which is an optimal estimate for the problems with smooth operators. %Also, for the considered class of non-smooth problems, we have the quality estimate $O\left(\frac{1}{\sqrt{N}}\right)$ of the solution after $N$ iterations. From the right inset figure in Fig. \ref{results_alg12_SVM}, we see that the proposed Algorithm \ref{Alg_50}, can achieve a better quality estimate when $N$ extremely grows.  

\begin{figure}[htp]
	%\minipage{0.50\textwidth}
	\includegraphics[width=9cm, height=7cm]{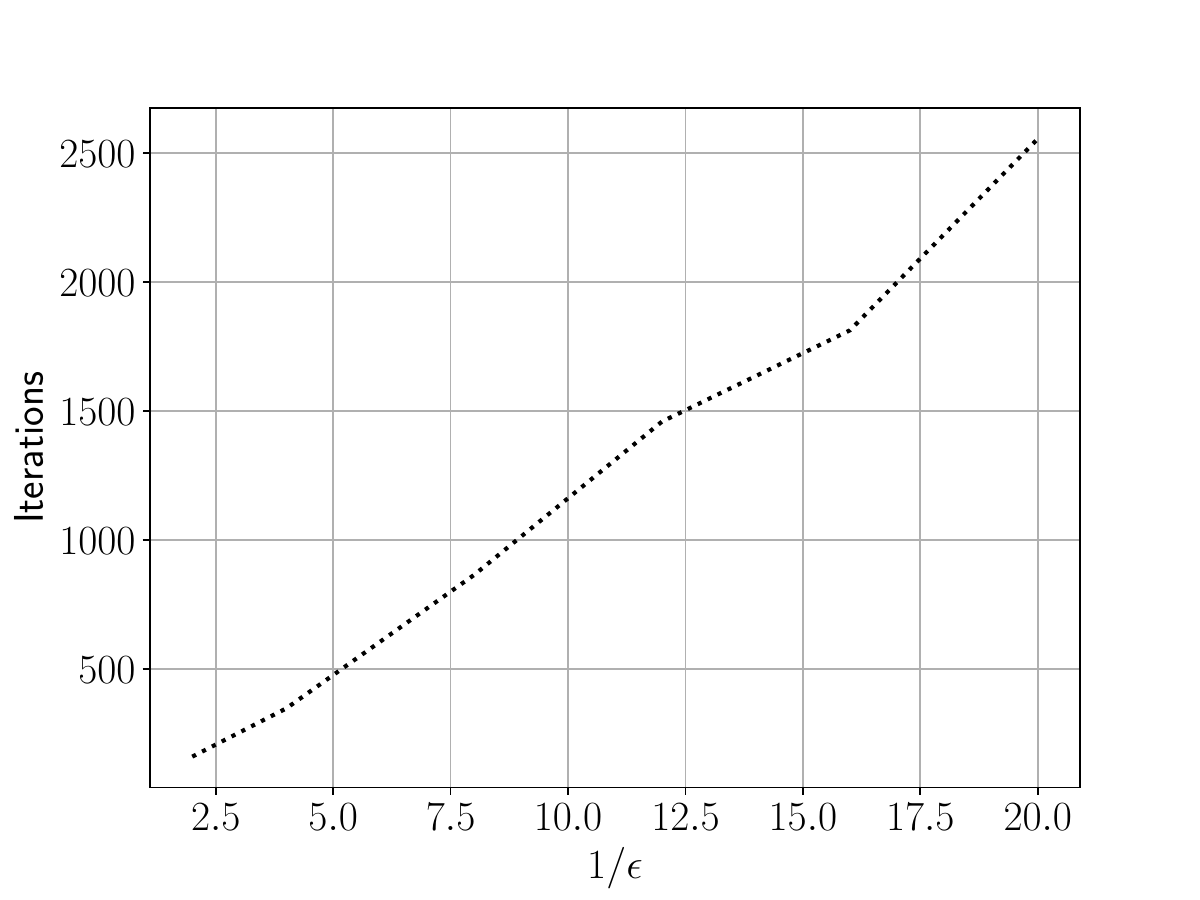}
	%\endminipage\hfill
	%\minipage{0.50\textwidth}
	%\includegraphics[width=\linewidth]{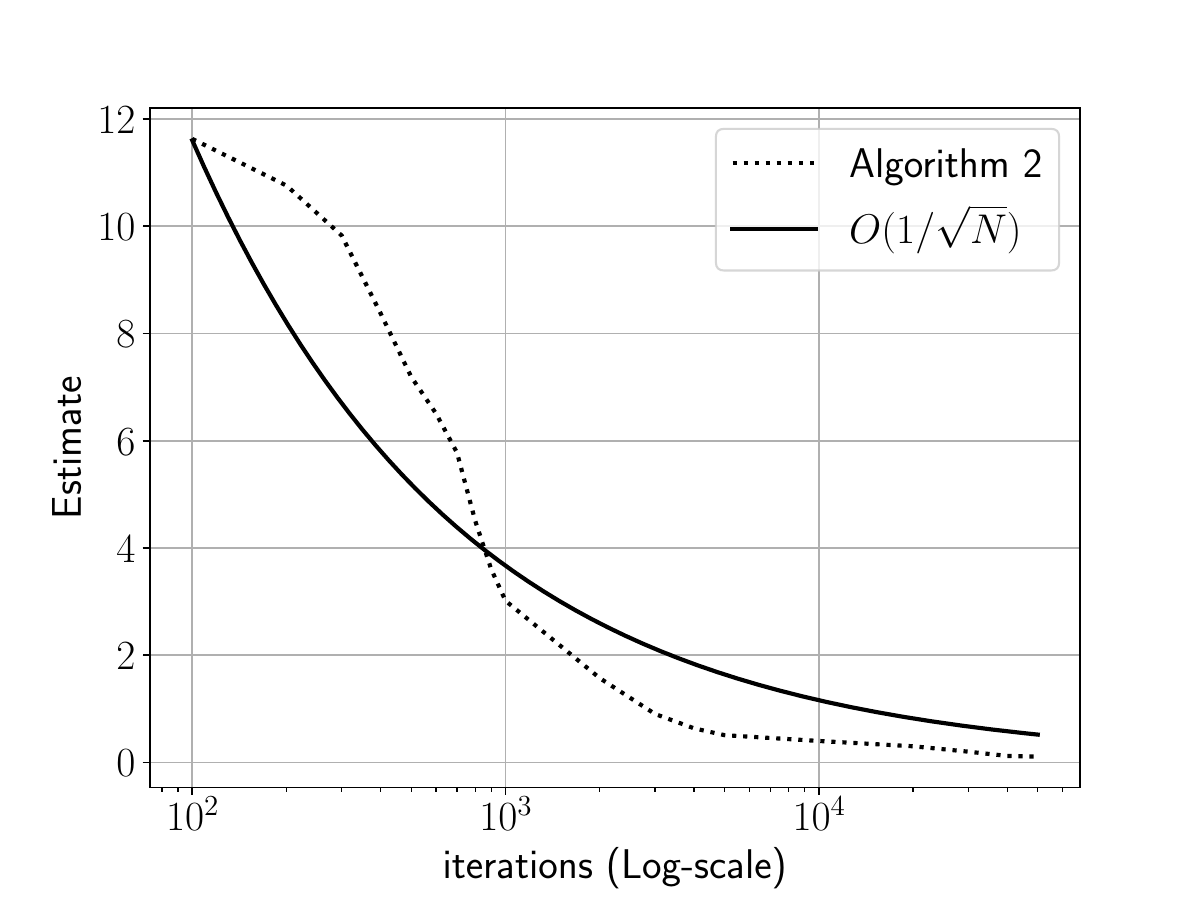}
	%\endminipage\hfill
	\caption{The results of Algorithm \ref{adaptive_alg3}, for the problem \ref{problem_min_for_SPP}.}
	\label{results_alg12_SVM}
\end{figure}

\section*{Conclusions}
In this paper, we considered $(\alpha, L, \delta)$-relatively smooth optimization problems which provide for the possibility of minimizing both relatively smooth and relatively Lipschitz continuous functions. For such a type of problems, we introduced some adaptive and universal methods with optimal estimates of the convergence rate. We also considered the problem of solving the variational inequality with a relatively bounded operator.  Finally, we presented the results of numerical experiments for the considered algorithms.

\bigskip
The authors are very grateful to Dmitry Pasechnyuk for fruitful discussions. Also, the authors are very grateful to the unknown reviewers for their extremely valuable comments.

%The work of A. Tytov was supported by the Ministry of Science and Higher Education of the Russian Federation (Goszadaniye), No. 075-00337-20-03, project No. 0714-2020-0005.
%The work of F. Stonyakin and A. Gasnikov was supported by the strategic academic leadership program "Priority 2030" (Agreement  075-02-2021-1316 30.09.2021).
%The work of F. Stonyakin was supported by the strategic academic leadership program "Priority 2030" (Agreement  075-02-2021-1316 30.09.2021).
%The work of A. Gasnikov was supported by the Ministry of Science and Higher Education of the Russian Federation (Goszadaniye), No. 075-00337-20-03, project No. 0714-2020-0005.

\newpage

%%%%%%%%%%%%%%%%%%%%%%%%%%%%%%%%%%%%%%%%%%%%%%%%%%%%%%%%%%%%%%%%%%%%%%%%   Appendix A %%%%%%%%%%%%%%%%%%%%%%%%%%%%%%%%%%%%%%%%%%%%%%%%%%%%
\section*{Appendix A. The proof of Theorem \ref{the_VI}}
\begin{proof}
Due to \eqref{minVI}, \eqref{Lemma} \rev{and \eqref{Lemma}}, for each $x \in Q$, we have 
$$
\begin{aligned}
\langle g(x_k),x_{k+1}-x\rangle  \leqslant L_{k+1}V(x,x_k)-L_{k+1}V(x,x_{k+1})- L_{k+1}V(x_{k+1},x_k).
\end{aligned}
$$

Thus, taking into account \eqref{condVI} and monotonicity of $g$, we get
\begin{equation*}
    \begin{aligned}
     L_{k+1}V(x,x_k)-L_{k+1}  V(x&,x_{k+1}) \geqslant \langle g(x_k),x_{k+1}-x\rangle+L_{k+1}V(x_{k+1},x_k)
     \\& = \langle g(x_k),x_{k+1}-x\rangle+\langle g(x_k),x_{k+1}-x_k\rangle 
     \\& \;\;\;\; + L_{k+1}V(x_{k+1},x_k) - \langle g(x_k),x_{k+1}-x_k\rangle.
     \\&  \geqslant\langle g(x_k),x_{k+1}-x\rangle-\langle g(x_k),x_{k+1}-x_k\rangle-\frac{\varepsilon}{2}
     \\& = \langle g(x_k),x_{k}-x \rangle - \frac{\varepsilon}{2} \geqslant \langle g(x_k),x_{k}-x \rangle - \frac{\varepsilon}{2},
    \end{aligned}
\end{equation*}
whence we obtain that
\begin{equation}\label{eee}
    \langle g(x),x_{k}-x\rangle\leqslant L_{k+1}V(x,x_k)-L_{k+1}V(x,x_{k+1})+\frac{\varepsilon}{2}, \quad \forall x \in Q.
\end{equation}
Taking summation over both sides of \eqref{eee}, we have
$$
    \sum\limits_{k=0}^{N-1} \frac{1}{L_{k+1}}\langle g(x),x_{k}-x\rangle\leqslant V(x,x_0)+ \sum\limits_{k=0}^{N-1}\frac{\varepsilon}{2L_{k+1}}, \quad \forall x\in Q,
$$
which leads to
$$
    \langle g(x),\widehat{x}-x\rangle\leqslant \frac{1}{S_N}\sum\limits_{k=0}^{N-1} \frac{1}{L_{k+1}}\langle g(x),x_{k}-x\rangle\leqslant \frac{R^2}{S_N}+\frac{\varepsilon}{2},
$$
and
$$
    \max\limits_{x\in Q}\langle g(x),\widehat{x}-x\rangle\leqslant \frac{R^2}{S_N}+\frac{\varepsilon}{2}.
$$

As operator $g$ is relatively bounded, i.e.
$$
    \langle g(x), x - y\rangle \leqslant M\sqrt{2V(y,x)} \leqslant \frac{M^2V(y,x)}{\varepsilon} + \frac{\varepsilon}{2}, \quad \forall \varepsilon >0,
$$
the stopping criterion of Algorithm \ref{adaptive_alg3} is guaranteed to be satisfied for $L_{k+1}\geqslant \frac{M^2}{\varepsilon}$. 
Since the exit from the iteration will certainly happen for $L_{k+1} \leqslant \frac{2M^2}{\varepsilon},$ we have
$$
    \frac{R^2}{S_N} \leqslant \frac{2M^2R^2}{\varepsilon N}.
$$
Thus, the total number of iterations of Algorithm \ref{adaptive_alg3} will not exceed 
$$
N=\left\lceil\displaystyle\frac{4M^2R^2}{\varepsilon^2}\right\rceil.
$$
\end{proof}

\section*{Appendix B. The proof of Theorem \ref{theorem_estimate_alg50}}

\begin{proof}
The proof of \eqref{estimate_alg50} is similar to the proof of Theorem \ref{the_VI}, with  $\displaystyle\frac{\varepsilon}{2} = \delta_{k+1}.$
Let us assume that on the  $(k+1)$-th iteration  $(k=0,1,\ldots, N-1)$ of the Algorithm \ref{Alg_50}, the auxiliary problem \eqref{subproblem_alg50} is solved  $i_{k+1}$ times. Then
$$ 
    2^{i_{k+1}-2}= \frac{L_{k+1}}{L_{k}}=\frac{\delta_{k+1}}{\delta_{k}},
$$
since at the beginning of each iteration the parameters  $L_{k}, \delta_{k}$ are divided by 2. Therefore,
$$
    \sum\limits_{k=0}^{N-1} i_{k+1}=2N+\log_2 \frac{L_N}{L_0},\quad \log_2 \frac{L_N}{L_0}=\log_2 \frac{\delta_N}{\delta_0}.
$$
It is clear that at least one of the inequalities  $L_N \leqslant 2L, \delta_N \leqslant 2 \delta$ holds, which ends the proof of the theorem.
\end{proof}

\section*{Appendix C. The proof of Theorem \ref{theorem_adaptive_Alg_2}}
\begin{proof}
Let us use the reasoning in the proof of Theorem \ref{the_VI} for $ g(x)=\nabla f(x)$. \rev{Taking into account \eqref{Lemma},} for any $x \in Q$, we have, 
\begin{equation*}
    \begin{aligned}
     \langle \nabla f(x_k),x_{k+1}-x\rangle \leqslant L_{k+1}V(x,x_k)-L_{k+1}V(x,x_{k+1})-L_{k+1}V(x_{k+1},x_k).
    \end{aligned}
\end{equation*}
%Further,
%$$
%    \langle\nabla f(x_k),x_{k+1}-x\rangle\leqslant L_{k+1}V(x,x_k)-L_{k+1}V(x,x_{k+1})-L_{k+1}V(x_{k+1},x_k),
%$$
Thus, taking into account \eqref{condalg4}, we get
\begin{equation*}
    \begin{aligned}
     L_{k+1}V(x,x_k)-L_{k+1}V( & x,  x_{k+1})  \geqslant \langle\nabla f(x_k),x_{k+1}-x\rangle+L_{k+1}V(x_{k+1},x_k)
     \\& = \langle\nabla f(x_k),x_{k+1}-x\rangle + \langle\nabla f(x_k),x_{k+1}-x_k\rangle
     \\& \;\;\;\; + L_{k+1}V(x_{k+1},x_k)-\langle\nabla f(x_k),x_{k+1}-x_k\rangle
     \\& \geqslant\langle\nabla f(x_k),x_{k+1}-x\rangle-\langle\nabla f(x_k),x_{k+1}-x_k\rangle-\frac{\varepsilon}{2}
     \\& =\langle\nabla f(x_k),x_{k}-x \rangle - \frac{\varepsilon}{2}.
    \end{aligned}
\end{equation*}
So, we have
\begin{equation}\label{eq1}
    \langle \nabla f(x_k),x_{k}-x\rangle\leqslant L_{k+1}V(x,x_k)-L_{k+1}V(x,x_{k+1})+\frac{\varepsilon}{2}, \quad \forall x \in Q.
\end{equation}
Taking summation over both sides of \eqref{eq1}, we obtain
$$
    \sum\limits_{k=0}^{N-1} \frac{1}{L_{k+1}}\langle \nabla f(x_k),x_{k}-x\rangle\leqslant V(x,x_0)+\sum\limits_{k=0}^{N-1}\frac{\varepsilon}{2L_{k+1}},\quad \forall x\in Q.
$$
Further, in view of the inequality
$$
    \langle \nabla f(x_k),x_{k}-x\rangle\geqslant f(x_k)-f(x),
$$
we have
$$
    \sum\limits_{k=0}^{N-1} \frac{1}{L_{k+1}}\left(f(x_k)-f(x)\right)\leqslant V(x,x_0)+\sum\limits_{k=0}^{N-1}\frac{\varepsilon}{2L_{k+1}}.
$$
Moreover, since $f$ is convex, the following inequality holds
$$
    \rev{\sum\limits_{k=0}^{N-1} \frac{1}{L_{k+1}}} f(x_k)\geqslant S_Nf(\widehat{x}),
$$
where $S_N=\sum\limits_{k=0}^{N-1} \frac{1}{L_{k+1}}.$
Then
\begin{equation}\label{eq2}
    \rev{\sum\limits_{k=0}^{N-1} \frac{1}{L_{k+1}}}\left(f(x_k)-f(x)\right)\leqslant V(x,x_0) + \frac{\varepsilon}{2}S_N.
\end{equation}
Since $V(x_*,x_0)\leqslant R^2,$ we obtain, for  $x = x_*$ in \eqref{eq2}, that
$$
    f(\widehat{x})-f(x_*)\leqslant \frac{R^2}{S_N}+\frac{\varepsilon}{2}.
$$
\end{proof}
\section*{Appendix D. The proof of Theorem \ref{theorem_Algor2}}
\begin{proof}
1) Taking into account the standard minimum condition for the subproblem \eqref{eqproblem} \rev{and \eqref{Lemma}}, we have
$$
    \langle \nabla f(x_k) + L_{k+1}\nabla_{x = x_{k+1}}V(x, x_k), x - x_k \rangle \geqslant 0, \quad \forall\, x \in Q.
$$
After the completion of the  $k$-th iteration  $(k = 0, 1, \ldots)$ of the Algorithm  \ref{Algor2}, the following inequalities hold
\begin{equation*}
    \begin{aligned}
     \langle\nabla f(x_k),x_{k+1}-x \rangle \leqslant L_{k+1}V(x,x_k)-L_{k+1}V(x,x_{k+1})-L_{k+1}V(x_{k+1},x_k),
    \end{aligned}
\end{equation*}
and
$$
    f(x_{k+1})\leqslant f(x_{k})+ \langle\nabla f(x_k),x_{k+1}-x_k\rangle+L_{k+1}V(x,x_k)-L_{k+1}V(x,x_{k+1})+\delta_{k+1}.
$$
Therefore,
$$
    f(x_{k+1})\leqslant f(x_{k})+ \langle\nabla f(x_k),x-x_k\rangle + L_{k+1}V(x,x_k)-L_{k+1}V(x,x_{k+1})+\delta_{k+1}.
$$
Further, taking into account the inequality $f(x_{k})+ \langle\nabla f(x_k),x-x_k\rangle \leqslant f(x)$, for $x=x_{*}$, we obtain
$$
 f(x_{k+1})-f(x_{*})\leqslant L_{k+1}V(x_{*},x_k)-L_{k+1}V(x_{*},x_{k+1})+\delta_{k+1},
$$
whence, after summation, in view of the convexity of  $f$, we have
\color{black}{
$$
    f(\widehat{x})-f(x_{*})\leqslant\frac{1}{S_N}\sum\limits_{k=0}^{N-1} \frac{f(x_{k+1})}{L_{k+1}}-f(x_{*})\leqslant \frac{V(x_{*},x_0)}{S_N}+ \frac{1}{S_N}\sum\limits_{k=0}^{N-1} \frac{\delta_{k+1}}{L_{k+1}}.
$$}
2) Since $f$ satisfies \eqref{eqalpha1relsm} and \eqref{eqalpha1relsm1}, for sufficiently large $L_{k+1}$ and $\delta_{k+1}$ the iteration exit criterion will certainly be satisfied. According to \eqref{eqalpha1relsm}, for some fixed  $L>0$ and $\delta > 0$, the following inequality holds
$$
    f(y) \leqslant f(x) + \langle \nabla f(x), y - x \rangle + LV(y, x) + \alpha LV(x, y) + \delta, \quad \forall x, y \in Q.
$$
Therefore, for  $L_{k+1} \geqslant L$ and taking into account  \eqref{eqproblem} we obtain
\color{black}{\begin{equation*}
    \begin{aligned}
    f(x_{k+1})&\leqslant f(x_k) + \langle \nabla f(x_k), x_{k+1} - x_k \rangle + L_{k+1}(V(x_{k+1}, x_k) 
    \\& \;\;\;\; + \alpha V(x_k, x_{k+1}))+ \delta 
    \\& \leqslant f(x_k) - L_{k+1}(1 - \alpha)V(x_k, x_{k+1}) + \delta,
    \end{aligned}
\end{equation*}
}
whence
\begin{equation}\label{ineq1}
    \alpha f(x_{k+1})\leqslant \alpha f(x_k) - L_{k+1}\alpha(1 - \alpha)V(x_k, x_{k+1}) + \alpha\delta.
\end{equation}
Now, in view of  \eqref{eqalpha1relsm} and taking into account  $1 - \alpha \geqslant 0$, the following inequality holds
\begin{equation}\label{ineq2}
\begin{aligned}
(1-\alpha)f(x_{k+1}) &\leqslant (1-\alpha) f(x_k) + (1-\alpha)\langle \nabla f(x_k), x_{k+1} - x_k \rangle + 
\\& \;\;\;\; + L_{k+1}(1- \alpha)(V(x_{k+1}, x_{k}) + \alpha V(x_{k}, x_{k+1})) + (1-\alpha)\delta
\end{aligned}
\end{equation}
and for $\alpha = 0$ we have
\begin{equation}\label{ineq211}
\begin{aligned}
 f(x_{k+1})\leqslant f(x_k) + \langle \nabla f(x_k), x_{k+1} - x_k \rangle + L_{k+1}V(x_{k+1}, x_{k}) + \delta.
\end{aligned}
\end{equation}
Taking into account \eqref{eqalpha1relsm1} for $\alpha >0$, after summing the inequalities \eqref{ineq1} and \eqref{ineq2}, we have
\begin{equation*}\label{ineq3}
\begin{aligned}
f(x_{k+1}) & \leqslant f(x_k) + (1 - \alpha)\langle \nabla f(x_k), x_{k+1} - x_k \rangle +  L_{k+1}(1 - \alpha)V(x_{k+1}, x_{k}) + 
\\&  \;\;\;\; \delta
\\& \leqslant f(x_k) + \langle \nabla f(x_k), x_{k+1} - x_k \rangle + L_{k+1}V(x_{k+1}, x_{k}) + \alpha\delta 
\\&\leqslant f(x_k) + \langle \nabla f(x_k), x_{k+1} - x_k \rangle + L_{k+1}V(x_{k+1}, x_{k}) + \delta,
\end{aligned}
\end{equation*}
i.e. \eqref{ineq211} holds for each  $\alpha \in [0; 1]$. It means that the iteration exit criterion of the Algorithm \ref{Algor2} will certainly  be satisfied for  $L_{k+1} \geqslant L$ and $\delta_{k+1} \geqslant \delta$.

3) Let us assume that on the  $(k+1)$-th iteration  $(k=0,1,\ldots, N-1)$ of the Algorithm \ref{Algor2}, the auxiliary problem \eqref{eqproblem} is solved  $i_{k+1}$ times. Then
$$ 
    2^{i_{k+1}-2}= \frac{L_{k+1}}{L_{k}}=\frac{\delta_{k+1}}{\delta_{k}},
$$
since at the beginning of each iteration the parameters  $L_{k}, \delta_{k}$ are divided by 2. Therefore,
$$
    \sum\limits_{k=0}^{N-1} i_{k+1}=2N+\log_2 \frac{L_N}{L_0},\quad \log_2 \frac{L_N}{L_0}=\log_2 \frac{\delta_N}{\delta_0}.
$$
It is clear that at least one of the inequalities  $L_N\leqslant2L, \delta_N\leqslant2\delta$ holds, which ends the proof.
\end{proof}

\section*{Appendix E. The proof of Theorem \ref{ThmUnivMeth2}}
\begin{proof}

\color{black}{1) Analogously with i.1) of the  Theorem's \ref{theorem_Algor2} proof, we have
\begin{equation}\label{equat5.2}
      f(\widehat{x})-f(x_{*})\leqslant\frac{1}{S_N}\sum\limits_{k=0}^{N-1} \frac{f(x_{k+1})}{L_{k+1}}-f(x_{*})\leqslant \frac{V(x_{*},x_0)}{S_N}+  \frac{3\varepsilon}{4}.
\end{equation}
}

2) Analogously with i.2) of the Theorem's \ref{theorem_Algor2} proof, we conclude that for each ($\alpha, L, \delta$)-relatively smooth function $f$ the criterion for the exit from the iteration is certainly fulfilled for $L_{k+1}\geqslant L$.

%A gradient-type descent step of the form

%\begin{equation*}\label{plus_ineq}
%    x_{k+1}:= \arg\min\limits_{x\in Q}\{\langle\nabla f(x_k),x\rangle+L_{k+1}V(x,x_k)\},
%\end{equation*}
%means that the following inequality holds
%\begin{equation}\label{2star_ineq}
%    \begin{aligned}
%     \langle\nabla f(x_k),x_{k+1}-x_k\rangle &\leqslant -L_{k+1}\langle\nabla d(x_{k+1})-\nabla d(x_k),x_{k+1} -x_k\rangle \\
%     & =-L_{k+1}V(x_{k+1},x_k)-L_{k+1}V(x_{k},x_{k+1}).
%    \end{aligned}
%\end{equation}

%Due to $M$-relative Lipschitz continuity of $f$
%$$
%    \langle \nabla f(x),y-x\rangle+M\sqrt{2V(y,x)}\geqslant 0, \quad \forall x,y\in Q,
%$$
3) Due to \eqref{equat5.2} for each $k \geqslant 0$ we have
%$$
%    0\leqslant \langle\nabla f(x),y-x\rangle + M \sqrt{2V(y,x)} \leqslant \langle\nabla f(x),y-x\rangle + \frac{2M^2}{\varepsilon}V(y,x) +\frac{\varepsilon}{4}.
%$$
%and
%$$
%    f(x_{k+1})\leqslant f(x_k)+ \langle \nabla f(x_k),x_{k+1}-x_k\rangle  + L_{k+1}V(x_{k+1},x_k) + \frac{3\varepsilon}{4}.
%$$
%and for $L_{k+1}\geqslant L$ the criterion for the exit from the iteration is certainly fulfilled. 

%This means that we can apply Theorem 3.1 from \cite{Stonyakin_etal} about the evaluation of the quality of the solution produced by the gradient method using the $\left(\delta,L,V \right)$-model $(\delta=\frac{3\varepsilon}{4})$ at the requested point. So, for $S_N =\sum\limits_{k=0}^{N-1} \frac{1}{L_{k+1}}$ we obtain
$$
    f(\widehat{x})-f(x_*)\leqslant\frac{R^2}{S_N}+\frac{3\varepsilon}{4}.
$$
So, for each ($\alpha, L, \delta$)-relatively smooth function $f$ the exit from the iteration will certainly happen for  $L_{k+1}\leqslant 2L,$ whence
$$
    S_N\geqslant \frac{N}{2L}\quad \text{and}\quad \frac{R^2}{S_N}\leqslant\frac{2LR^2}{N}.
$$    
If we require the condition $\displaystyle\frac{2LR^2}{N}\leqslant\displaystyle\frac{\varepsilon}{4},$ we have that $f(\widehat{x})-f(x_*)\leqslant \varepsilon$  certainly holds for $$N\geqslant \frac{8LR^2}{\varepsilon}.$$

%The exit from the iteration will certainly happen for  $L_{k+1}\leqslant \displaystyle\frac{4M^2}{\varepsilon},$ whence
%$$
%    S_N\geqslant \frac{\varepsilon N}{4M^2}\quad \text{and}\quad \frac{R^2}{S_N}\leqslant\frac{4M^2R^2}{\varepsilon N}.
%
If we require that f is a $\left(1, \frac{2M^2}{\varepsilon}, \frac{\varepsilon}{2}\right)$-relatively smooth function,
we have that $$N\geqslant \frac{8LR^2}{\varepsilon} = \frac{16M^2R^2}{\varepsilon^2}.$$

%$f(\widehat{x})-f(x_*)\leqslant \varepsilon$  certainly holds for $N\geqslant \displaystyle\frac{16M^2R^2}{\varepsilon^2}$.

Note, that if $f$ is relatively Lipschitz continuous function then f is a $\left(1, \frac{2M^2}{\varepsilon}, \frac{\varepsilon}{2}\right)$-relatively smooth function. Indeed, we have
\textcolor{black}{$$
  f(y) - f(x)\leqslant \langle \nabla f(y),y - x \rangle \leqslant M\sqrt{2V(x,y)},
$$}
i.e.
\begin{equation}\label{111}
    f(y) \leqslant f(x) + M\sqrt{2V(x,y)},
\end{equation}
and
\begin{equation}\label{222}
    \langle\nabla f(x),x-y\rangle 
    \color{black}{\leqslant} M\sqrt{2V(y,x)}.
\end{equation}

After summing inequalities \eqref{111} and \eqref{222}, we get, that the following inequalities hold for any $x, y \in Q$:
\begin{equation}\label{1star_ineq}
   \begin{aligned}
    f(y) &\leqslant f(x)+\langle\nabla f(x),y-x\rangle +M \left(\sqrt{2V(y,x)}+\sqrt{2V(x,y)}\right) \\
   &\leqslant f(x)+\langle \nabla f(x),y-x\rangle +\frac{2M^2}{\varepsilon}\left(V(y,x)+V(x,y)\right)+ \frac{\varepsilon}{2}.
   \end{aligned}
\end{equation}
%Let us note, that the following inequality holds for any $a, b \geqslant 0$
%\begin{equation*}\label{3star_ineq}
%  \sqrt{2ab}\leqslant a+\frac{b}{2}, \quad \forall a,b\geqslant 0.  
%\end{equation*}

%Inequalities \eqref{1star_ineq} mean that for $\displaystyle L_{k+1} \geqslant \frac{\displaystyle2M^2}{\displaystyle\varepsilon}$
%, after step \eqref{plus_ineq} is certainly true, we have the following
%$$
%    f(x_{k+1})\leqslant f(x_k)+\langle\nabla f(x_k),x_{k+1}-x_k\rangle + L_{k+1}V(x_{k+1},x_k) +L_{k+1}V(x_{k},x_{k+1}) + \frac{\varepsilon}{2},
%$$
%whence, taking into account \eqref{2star_ineq}, we obtain
%\begin{equation*}\label{2plus_ineq}
%    f(x_{k+1})\leqslant f(x_k)+ \frac{\varepsilon}{2}.
%\end{equation*}
\end{proof}

\end{document}